\newtheorem{theorem}{Theorem}[section]
\newtheorem{corollary}[theorem]{Corollary}
\newtheorem{lemma}[theorem]{Lemma}
\theoremstyle{definition}
\newtheorem{definition}[theorem]{Definition}
\newtheorem{remark}[theorem]{Remark}
\newtheorem*{prR}{Property (R)}
\newcommand*{\myproofname}{Proof}
\newcommand{\N}{\mathbb N}
\newcommand{\Z}{\mathbb Z}
\renewcommand{\a}{\alpha}
\renewcommand{\b}{\beta}
\newcommand{\g}{\sigma}
\renewcommand{\c}{\gamma}
\newcommand{\Aut}{\text{{\rm Aut}}}
\newcommand{\Sh}{\text{{\rm Sh}}_G}
\newcommand{\Succ}{\text{{\rm Succ}}_G}
\newcommand{\R}{R}
\renewcommand{\l}{\ell}
\renewcommand{\d}{\mathbf{d}}
\newcommand{\SH}{S}
\begin{document}

\title{Stabilizers of consistent walks}

\author[M.~Lek\v se]{Maru\v sa Lek\v se} \address{ Faculty of Mathematics and Physics, University of Ljubljana, Slovenia; \newline Institute of Mathematics, Physics, and Mechanics, Ljubljana, Slovenia}
\email{marusa.lekse@imfm.si}

\subjclass[2020]{05E18, 05C25.}
\keywords{Consistent walk, stabilizer, weakly $p$-subregular, number of generators}
\thanks{The work was supported by Javna agencija za znanstvenoraziskovalno in inovacijsko dejavnost Republike Slovenije (ARIS), research program P1-0294.}

\maketitle

\begin{abstract}
A walk of length $n$ in a graph is consistent if there exists an automorphism of the graph that maps the initial $n-1$ vertices to the final $n-1$ vertices of the walk. In this paper we find some sufficient conditions for a consistent walk in an arc-transitive graph to have a trivial pointwise stabilizer. We show that in that case, the size of the smallest generating set of the group is bounded by the valence of the graph.
\end{abstract}

\section{Introduction}\label{s1}

For a graph $\Gamma$ and a group $G\leq \Aut(\Gamma)$, we say that a walk $(v_0, v_1, \ldots, v_n)$ of length $n \geq 1$ is \emph{$G$-consistent} if there exists an automorphism $g \in G$ (called a \emph{shunt} for the walk) such that $v_i^g = v_{i+1}$ for all $i \in \{0, \ldots, n-1\}$.

Consistent walks play a prominent role in algebraic graph theory and feature already in the seminal work of Tutte \cite{Tutte+1947, Tutte+1959} on cubic arc-transitive graphs, where he proved that the order of a vertex stabilizer $G_v$ in a $G$-arc-transitive $3$-valent graph is bounded from above by the constant $48$. In his proof, he considered the largest value of $s$ such that $G$ acts transitively on the set of all $s$-arcs of $\Gamma$ (observe that then every $s$-arc is a $G$-consistent walk). Tutte's proof consists of two parts:
\begin{enumerate}
    \item[(i)] \label{del1} show that the stabilizer of an $s$-arc is trivial,
    \item[(ii)] show that $s$ is bounded by $5$.
\end{enumerate}

Bounding the order of a vertex-stabilizer in highly symmetric graphs has become a classical theme in algebraic graph theory, see \cite{Potocnik+2007, Potocnik+2012, Spiga+2016_2, Spiga+2016, Verret+2009, Verret+2013, Weiss+1981}. 
In this paper, we will be interested in generalizations of part~(i) of Tutte's approach. In particular, we will be interested in circumstances in which we can guarantee the existence of a $G$-consistent walk with a trivial stabilizer. We will focus on the conditions imposed on the \emph{local group} $G_v^{\Gamma(v)}$, that is, the permutation group induced by the action of a vertex stabilizer on the neighborhood of that vertex. As a consequence of a technical result, Lemma \ref{dva} proved in Section \ref{s3}, the following two results are obtained: if $\Gamma$ is a $G$-arc-transitive graph and either its valence is at most $4$, or the local group $G_v^{\Gamma(v)}$ is a weakly $p$-subregular group, then there exists a $G$-consistent walk with a trivial stabilizer (see Definition \ref{def_wps} in Section \ref{s4}). In Section \ref{s6} we find examples of graphs in which no $G$-consistent walk has a trivial stabilizer.

Finally, in Section \ref{s7}, we use the techniques and results developed in this paper
to address a question raised by Marco Barbieri and Pablo Spiga in \cite{Barbieri+2024}. Indeed, \cite[Problem~4]{Barbieri+2024} asks under which additional assumptions one can guarantee existence of a function $f(d)$ such that every vertex-transitive group of automorphisms of a finite $d$-valent graph can be generated by $f(d)$ generators. As a partial answer to this question, 
we show that whenever a vertex-transitive group $G$ of automorphisms of a finite $d$-valent graph $\Gamma$ is such that there exists a $G$-consistent walk with a trivial stabilizer in $G$, the size of the smallest generating set of $G$ is bounded from above by the valence of $\Gamma$ (see Corollary \ref{CC} for a detailed statement). In particular, this implies that the assumption of having weakly $p$-subregular local group implies that $G$ is $3$-generated.

\section{Preliminaries}\label{s2}
Throughout this paper, $\Gamma$ is a finite simple graph with at least three vertices, unless otherwise stated. We say that two vertices $u,v \in V(\Gamma)$ are \emph{twin vertices}, if $\Gamma(u) = \Gamma(v)$, where $\Gamma(u)$ denotes the neighborhood of a vertex $u$ in $V(\Gamma)$. In what follows let $n$ be a positive integer and let $\a = (v_0, \ldots, v_n)$ be a $(n+1)$-tuple of vertices of $\Gamma$ such that any two consecutive vertices are adjacent in $\Gamma$; in this case we say that $\a$ is a \emph{walk} of length $n$. 
If any three
consecutive vertices in $\a$ are pairwise distinct we say that $\a$ is a \emph{$n$-arc} (or simply an \emph{arc} when $n=1$). If $v_0 = v_n$ we say that $\a$ is \emph{closed} and if in addition any two vertices in $\{v_0, \ldots, v_{n-1}\}$ are distinct, then $\a$ is a \emph{cycle}.
Note that there are no cycles of length $1$, and that every arc $(u,v)$ of length $2$ yields 
a cycle $(u,v,u)$ of length $2$; we will refer to cycles of length $2$ as \emph{trivial}. Note that contrary to the usual definition, all cycles in this paper are directed and rooted (that is, they have a designated initial vertex).
Finally if $n \geq 1$ we let $\hat \a = (v_0, \ldots, v_{n-1})$.

Let $G \leq \Aut(\Gamma)$. We say that $G$ is \emph{vertex-transitive} (or that $\Gamma$ is $G$-vertex-transitive) if it acts transitively on the set $V(\Gamma)$, that $G$ is \emph{arc-transitive} (or that $\Gamma$ is $G$-arc-transitive) if it acts transitively on the set of all arcs in $\Gamma$ and that that $G$ is \emph{n-arc-transitive} (or that $\Gamma$ is $(G,n)$-arc-transitive) if it acts transitively on the set of all $n$-arcs in $\Gamma$. 
%
Recall that a walk $\a = (v_0, \ldots, v_n)$ is $G$-consistent with a shunt $g\in G$ whenever $(v_0, \ldots, v_{n-1})^g = (v_1, \ldots, v_n)$ (when $G = \Aut(\Gamma)$ we will omit $G$ and say that $\a$ is consistent).
 The set of all shunts  of  $\a$ will be denoted by $\Sh(\a)$. 
Observe that if $\a$ is a $G$-consistent walk of length at least $2$, then also $\hat{\a}$ is $G$-consistent and $\Sh(\a) \subseteq \Sh(\hat \a)$.

If $\a$ is $G$-consistent with a shunt $g$, then $\a^x$ is $G$-consistent with a shunt $g^x$ for every $x \in G$. Note that this action preserves the set of $G$-consistent cycles in $\Gamma$.
As was pointed out by J. H. Conway in \cite{Conw_talk}, the number of $G$-orbits on the set of $G$-consistent cycles in a $G$-arc transitive graph is equal to the valence of $\Gamma$. 
The following generalization to vertex-transitive graphs was proved in \cite{Miklavic+2007}. (For further results and applications on consistent cycles, see  \cite{Antoncic+2023, Sparl2, Kutnar+2023, Miklavic+2007, Miklavic+2007_2})

\begin{theorem}[{\cite[Theorem 5.1]{Miklavic+2007}}]\label{I2}
    Let $G$ be a vertex transitive group of automorphisms of a $d$-valent graph $\Gamma$. Then $G$ has precisely $d$ orbits in its action on the set of $G$-consistent cycles of $\Gamma$. 
\end{theorem}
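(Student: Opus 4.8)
The plan is to run a double-counting argument in the spirit of Conway's proof of the arc-transitive case (see \cite{Conw_talk}), carried out at the level of rooted consistent cycles together with their shunts. Let
\[
X = \{\,(v,g) : v\in V(\Gamma),\ g\in G,\ v^g\in\Gamma(v)\,\}
\]
be the set of pairs consisting of a vertex and an automorphism sending it to one of its neighbours. The first step is to exhibit a bijection between $X$ and the set of pairs $(C,g)$ with $C$ a $G$-consistent cycle of $\Gamma$ and $g\in\Sh(C)$. Given $(v,g)\in X$, the orbit of $v$ under $\langle g\rangle$, listed as $v, v^g, v^{g^2}, \dots, v^{g^{m-1}}$ with $v^{g^m}=v$, closes up into a rooted cycle $C$ of length $m\geq 2$: consecutive vertices are adjacent since $g\in\Aut(\Gamma)$ and $v^g\in\Gamma(v)$; the vertices $v,\dots,v^{g^{m-1}}$ are pairwise distinct, being an $\langle g\rangle$-orbit; and $m\geq 2$ because $\Gamma$ is simple, so that $v^g\neq v$. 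By construction $g\in\Sh(C)$ and $v$ is the root of $C$. Conversely, if $C=(v_0,\dots,v_{m-1},v_0)$ is $G$-consistent with shunt $g$, then $v_i^g=v_{i+1}$ (indices mod $m$), so the $\langle g\rangle$-orbit of the root $v_0$ equals $\{v_0,\dots,v_{m-1}\}$ and reconstructs $C$; the two assignments are mutually inverse. Hence $|X| = \sum_{C}|\Sh(C)|$, the sum being over all $G$-consistent cycles of $\Gamma$.

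Second, I would compute $|X|$ by summing over the vertex coordinate. For a fixed vertex $v$ and a fixed neighbour $u\in\Gamma(v)$, the set $\{g\in G : v^g=u\}$ is a right coset of the vertex stabilizer $G_v$, which is nonempty because $G$ is vertex-transitive, and hence has size $|G_v|$. Summing over the $d$ neighbours of $v$ gives $d\,|G_v|$ automorphisms sending $v$ into $\Gamma(v)$, and then summing over all $v$ and using $|V(\Gamma)|\cdot|G_v|=|G|$ yields $|X| = d\,|G|$. (Note that vertex-transitivity enters only here, and only to guarantee that these cosets are nonempty.)

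Third, I would evaluate $\sum_{C}|\Sh(C)|$ one $G$-orbit at a time. For a rooted $G$-consistent cycle $C=(v_0,\dots,v_{m-1},v_0)$, the definition of a shunt forces every shunt to act on $\{v_0,\dots,v_{m-1}\}$ as the $m$-cycle $v_0\mapsto v_1\mapsto\cdots\mapsto v_{m-1}\mapsto v_0$, so any two shunts of $C$ differ by an element fixing each $v_i$; thus $\Sh(C)$ is a coset of the pointwise stabilizer $G_{(C)}$ of $\{v_0,\dots,v_{m-1}\}$, and $|\Sh(C)|=|G_{(C)}|$. Moreover, since cycles here are rooted and directed, any automorphism stabilizing $C$ fixes each $v_i$, so the setwise stabilizer of $C$ in $G$ coincides with $G_{(C)}$. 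Consequently, within a single $G$-orbit $\mathcal O$ of $G$-consistent cycles with representative $C_0$, every $C\in\mathcal O$ has $|\Sh(C)|=|G_{(C_0)}|$ while $|\mathcal O| = |G|/|G_{(C_0)}|$, so that $\sum_{C\in\mathcal O}|\Sh(C)| = |G|$; summing over all orbits gives $\sum_{C}|\Sh(C)| = N|G|$, where $N$ is the number of $G$-orbits on $G$-consistent cycles. Comparing with $|X|=d\,|G|$ gives $N=d$. There is no serious obstacle here: the argument is essentially bookkeeping, and the only point demanding care is the consistent use of \emph{rooted} (directed) cycles, without which both the bijection with $X$ and the identity ``setwise stabilizer $=$ pointwise stabilizer'' fail; one should also note that a fixed point of $g$ never contributes to $X$, precisely because $\Gamma$ has no loops.
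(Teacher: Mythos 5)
The paper does not actually prove this statement --- it is imported verbatim from \cite[Theorem 5.1]{Miklavic+2007} --- so there is no in-paper argument to compare against. Your double count is correct and is essentially the classical Conway-style proof, adapted as it must be to the vertex-transitive case and to the paper's convention that cycles are rooted and directed. All the key steps check out: the correspondence $(v,g)\leftrightarrow (C,g)$ is a genuine bijection (the $\langle g\rangle$-orbit of $v$ has size at least $2$ precisely because $\Gamma$ is simple, and the first $m$ vertices of the resulting closed walk are pairwise distinct, so it is a cycle in the paper's sense); vertex-transitivity gives $|X|=d\,|G|$ via $|V(\Gamma)|\,|G_v|=|G|$ and the nonemptiness of the cosets $\{g: v^g=u\}$; and for a rooted directed cycle the set $\Sh(C)$ is a coset of the pointwise stabilizer, which coincides with the stabilizer of $C$ as a tuple, so each orbit contributes exactly $|G|$ to $\sum_C|\Sh(C)|$. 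The only hypotheses you use beyond the statement are finiteness and regularity of $\Gamma$, both of which are standing assumptions in the paper, so the argument is complete as written.
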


The following results are easy to prove and were either proved or mentioned in \cite{Miklavic+2007}. We list them here for the convenience of the reader. 

\begin{definition}
    Let $\Gamma$ be a graph and let $G \leq \Aut(\Gamma)$. For $n \geq 1$ we say that a walk $\b = (v_1, \ldots, v_n, v_{n+1})$ is a $G$-successor of a walk $\a = (v_0, \ldots, v_n)$ provided that there exists an automorphism $\g \in \Sh(\a)$ such that $v_n^g = v_{n+1}$. We denote the set of all $G$-successors of the walk $\a$ with $\Succ(\a)$.
\end{definition}

\begin{lemma}\label{odsek}
    Let $\Gamma$ be a graph, let $G \leq \Aut(\Gamma)$ and let $\a = (v_0, \ldots, v_n)$ be a $G$-consistent walk with a shunt $g \in \Sh(\a)$. Then $g G_{(v_1, \ldots, v_n)} = \Sh(\a) = G_{(v_0, \ldots, v_{n-1})} g$.
\end{lemma}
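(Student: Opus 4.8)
The plan is to unwind the definition of a shunt and to recognize $\Sh(\a)$ as a coset of a pointwise stabilizer of an initial (respectively terminal) segment of the walk. Recall that $h \in \Sh(\a)$ means precisely that $v_i^h = v_{i+1}$ for every $i \in \{0, \ldots, n-1\}$; in particular the shunt $g$ itself satisfies this, and $g^{-1}$ then sends $v_j$ to $v_{j-1}$ for every $j \in \{1, \ldots, n\}$. Both equalities will be proved by a routine double inclusion, with the only point of care being to keep the index ranges straight so that one never applies the shunt condition to an out-of-range index.

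First I would prove $\Sh(\a) = G_{(v_0, \ldots, v_{n-1})}\, g$. For $\supseteq$: if $k \in G_{(v_0, \ldots, v_{n-1})}$, then for each $i \le n-1$ we have $v_i^{kg} = v_i^{g} = v_{i+1}$, so $kg \in \Sh(\a)$. For $\subseteq$: given $h \in \Sh(\a)$ and $i \in \{0, \ldots, n-1\}$, applying $h$ sends $v_i$ to $v_{i+1}$ and then $g^{-1}$ sends $v_{i+1}$ back to $v_i$ (this step is legitimate since $i+1 \in \{1, \ldots, n\}$); hence $hg^{-1}$ fixes $v_0, \ldots, v_{n-1}$ pointwise, so $hg^{-1} \in G_{(v_0, \ldots, v_{n-1})}$ and $h \in G_{(v_0, \ldots, v_{n-1})}\, g$.

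The second equality $\Sh(\a) = g\, G_{(v_1, \ldots, v_n)}$ is entirely symmetric. For $\supseteq$: if $k \in G_{(v_1, \ldots, v_n)}$, then for each $i \le n-1$ we have $v_i^{gk} = v_{i+1}^{k} = v_{i+1}$ (using $i+1 \in \{1, \ldots, n\}$), so $gk \in \Sh(\a)$. For $\subseteq$: given $h \in \Sh(\a)$ and $j \in \{1, \ldots, n\}$, $g^{-1}$ sends $v_j$ to $v_{j-1}$ and then $h$ sends $v_{j-1}$ back to $v_j$ (legitimate since $j-1 \in \{0, \ldots, n-1\}$); hence $g^{-1}h$ fixes $v_1, \ldots, v_n$ pointwise, so $g^{-1}h \in G_{(v_1, \ldots, v_n)}$ and $h \in g\, G_{(v_1, \ldots, v_n)}$.

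There is no real obstacle here; the argument is purely mechanical once the definitions are expanded. I would organize the write-up as the two displayed coset identities, each established by the two-line inclusion argument above, and I would simply flag at the outset the index bookkeeping (the shunt condition constrains only $v_0, \ldots, v_{n-1}$, and $g^{-1}$ acts as its inverse only on $v_1, \ldots, v_n$) so that each verification reads off immediately.
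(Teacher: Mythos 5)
Your proof is correct; the paper states this lemma without proof (citing it as an easy fact from Miklavi\v{c}--Poto\v{c}nik--Wilson), and your double-inclusion argument via $hg^{-1} \in G_{(v_0,\ldots,v_{n-1})}$ and $g^{-1}h \in G_{(v_1,\ldots,v_n)}$ is exactly the standard verification one would write. The index bookkeeping you flag is handled correctly throughout.
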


\begin{lemma}\label{X}
    Let $\Gamma$ be a graph, let $G \leq \Aut(\Gamma)$, let $\a = (v_0, \ldots, v_{n})$ be a $G$-consistent walk with a shunt $g \in \Sh(\a)$ and let $v_{n+1} = v_n^g$. Then the walk $\b = \a^g =  (v_1, \ldots, v_{n+1})$ is also $G$-consistent with a shunt $g$ and
    \begin{align*}
        \Succ({\a}) &= {\a}^{\Sh({\a})} \\
        & = \b^{G_{(v_1, \ldots, v_{n})}}.
    \end{align*}
\end{lemma}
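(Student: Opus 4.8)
The plan is to unwind the definitions of \emph{shunt} and \emph{successor} and then invoke Lemma~\ref{odsek}; no genuinely new idea is needed.

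First I would record the shape of $\b$. Since $g$ is a shunt for $\a$ we have $v_i^g = v_{i+1}$ for $i \in \{0, \ldots, n-1\}$, and $v_n^g = v_{n+1}$ by hypothesis, so $\a^g = (v_0^g, \ldots, v_n^g) = (v_1, \ldots, v_{n+1}) = \b$. That $\b$ is $G$-consistent with shunt $g$ is then immediate from the fact (noted just before Lemma~\ref{odsek}) that $\a^x$ is $G$-consistent with shunt $g^x$ for every $x \in G$, applied with $x = g$, since $g^g = g$.

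Next I would prove $\Succ(\a) = \a^{\Sh(\a)}$. For any $\g \in \Sh(\a)$ the same computation as above (with $\g$ in place of $g$) gives $\a^\g = (v_1, \ldots, v_n, v_n^\g)$; in particular the initial length-$n$ segment of $\a^\g$ equals $(v_1, \ldots, v_n)$ regardless of $\g$, so by definition $\a^\g$ is a $G$-successor of $\a$, witnessed by $\g$ itself. This gives $\a^{\Sh(\a)} \subseteq \Succ(\a)$. Conversely, every $G$-successor of $\a$ has by definition the form $(v_1, \ldots, v_n, v_n^\g)$ for some $\g \in \Sh(\a)$, and this walk is exactly $\a^\g$; hence $\Succ(\a) \subseteq \a^{\Sh(\a)}$.

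Finally, for $\a^{\Sh(\a)} = \b^{G_{(v_1, \ldots, v_n)}}$: Lemma~\ref{odsek} gives $\Sh(\a) = g\, G_{(v_1, \ldots, v_n)}$, so
\[
\a^{\Sh(\a)} = \{\a^{gh} : h \in G_{(v_1, \ldots, v_n)}\} = \{(\a^g)^h : h \in G_{(v_1, \ldots, v_n)}\} = \b^{G_{(v_1, \ldots, v_n)}},
\]
using $\a^g = \b$ from the first step. The argument is routine; the only thing to watch is the index bookkeeping — the observation that applying any shunt of $\a$ shifts the relevant window of vertices by exactly one position, which is precisely what makes ``$G$-successor of $\a$'' and ``image of $\a$ under a shunt'' coincide.
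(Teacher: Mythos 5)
Your proof is correct; the paper states this lemma without proof (listing it as an easy fact from the consistent-cycles literature), and your argument — identifying $\a^\g$ with the successor $(v_1,\ldots,v_n,v_n^\g)$ for each shunt $\g$, and then translating $\Sh(\a)=gG_{(v_1,\ldots,v_n)}$ from Lemma~\ref{odsek} into the second equality — is exactly the intended routine verification.
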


\begin{lemma}\label{os}
    Let $\Gamma$ be a graph, let $G$ be a vertex-transitive subgroup of $\Aut(\Gamma)$, and let $\a = (v_0, \ldots, v_n)$ be a $G$-consistent walk. Then $|G_{\hat{\a}}| = |G_\a| |\Succ({\a})|$.
\end{lemma}

\begin{proof}
   Take $g \in \Sh(\a)$ and let $v_{n+1} = v_n^g$. 
   Now observe that $$|\a^{G_{\hat{\a}}}| = |\a^{g(G_{\hat{\a}})^g g^{-1}}| = |(v_1, \ldots, v_{n+1})^{G_{(v_1, \ldots, v_{n})}g^{-1}}|= |(v_1, \ldots, v_{n+1})^{G_{(v_1, \ldots, v_{n})}}| = |\Succ({\a})|,$$ by Lemma \ref{X}. Since the stabilizer of $\a$ in $G_{\hat \a}$ is $G_{\a}$, the result follows from the Orbit-Stabilizer Lemma.
\end{proof}

\section{Some conditions for a $G$-consistent walk to have a trivial stabilizer}\label{s3}

\begin{definition}
    Let $\Gamma$ be a graph and let $G \leq \Aut(\Gamma)$.
    Fix a $G$-consistent walk $\a = (v_0, \ldots, v_{n})$ of length $n \geq 2$.
    For two walks $ \beta = (b_0, \ldots, b_{n-1}),\ \gamma = (c_0, \ldots, c_{n-1})\in \hat{\a}^G$ we write $\beta \sim_{\a} \gamma$ with respect to $G$ provided that there exists a sequence of walks $\beta_0, \ldots, \beta_m \in \a^G$ such that $\hat{\beta_0} = \beta, \  \hat{\beta_m} = \gamma$ and that for every $i \in \{0, \ldots, m-1\}$ the walk $\beta_{i+1}$ is a $G$-successor of the walk  $\beta_{i}$. 
\end{definition}

We will just say that  $\beta \sim_{\a} \gamma$ and omit $G$ when $G$ is clear from context.
The next result appears without proof in \cite{Verret+2009}, we will prove it here for the convenience of the reader. The rest of the results in this section are also inspired by \cite{Verret+2009}.

\begin{lemma}\label{ekv}
    Let $G$ be a finite group that acts transitively on a set $X$. If $R$ is a transitive nonempty binary relation on the set $X$ that is preserved by $G$, it is an equivalence relation.
\end{lemma}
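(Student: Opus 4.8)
The statement to prove is that a nonempty, transitive, $G$-invariant binary relation $R$ on $X$ is an equivalence relation, where $G$ is finite and transitive on $X$. Since transitivity of $R$ is assumed, the whole task is to establish symmetry and reflexivity, and I would extract both from a single cyclic argument.

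The plan is as follows. Since $R$ is nonempty, fix a pair $(a,b) \in R$. Because $G$ acts transitively on $X$, choose $g \in G$ with $a^g = b$; let $m$ be the (finite, since $G$ is finite) order of $g$. Invariance of $R$ under $G$ — meaning $(x,y)\in R$ implies $(x^g,y^g)\in R$ — applied repeatedly to $(a,b) = (a, a^g)$ yields $(a^{g^i}, a^{g^{i+1}}) \in R$ for every $i \geq 0$. If $m = 1$ then $b = a$ and $(a,a) = (a,b) \in R$; otherwise, chaining these pairs along $a^{g} \to a^{g^2} \to \cdots \to a^{g^m} = a$ and using transitivity of $R$ gives $(a^g, a) = (b,a) \in R$. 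In either case $(b,a) \in R$, so whenever a pair lies in $R$ its reverse does too; this is symmetry.

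Reflexivity then comes for free: from $(a,b),(b,a) \in R$ and transitivity of $R$ we get $(a,a) \in R$, and since $G$ is transitive on $X$, applying elements of $G$ to $(a,a)$ shows $(x,x) \in R$ for every $x \in X$. Combined with the given transitivity, $R$ is an equivalence relation.

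I do not anticipate a serious obstacle here; the only points requiring care are the degenerate case $m=1$ (handled above) and making explicit that ``preserved by $G$'' should be read as closure of $R$ under the componentwise action of $G$, so that the orbit of a single pair under the cyclic group $\langle g\rangle$ already forces a directed cycle inside $R$ whose transitive closure contains the reversed edge.
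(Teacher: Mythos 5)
Your proof is correct. It rests on the same underlying idea as the paper's proof --- use finiteness to produce a directed cycle inside $R$ and then chain transitivity of $R$ around that cycle to reverse an edge --- but you execute it differently and, arguably, more cleanly. The paper argues by contradiction: it builds the digraph with arc set $\{(x,y)\mid xRy\}$, assumes $R$ is not symmetric, uses vertex-transitivity to find a one-way arc out of every vertex, extracts a finite cycle of one-way arcs, and then derives a contradiction from transitivity of $R$. You instead argue directly: for a given $(a,b)\in R$ you pick $g\in G$ with $a^g=b$, and the $\langle g\rangle$-orbit of the pair $(a,a^g)$ explicitly supplies the directed cycle $a\to a^g\to\cdots\to a^{g^m}=a$, whose transitive closure contains $(b,a)$. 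This avoids both the contradiction and the (slightly glossed-over) cycle-extraction step in the paper, at the cost of using the order of a specific group element rather than just finiteness of $X$; your version also quantifies correctly over all pairs, since the choice of $g$ can be made for each pair separately. The derivation of reflexivity from symmetry, transitivity of $R$, nonemptiness, and transitivity of $G$ matches the paper's.
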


\begin{proof}
    Let $\Gamma$ be a not neccessarily simple digraph with vertex set $V(\Gamma) = X$ and arc set $A(\Gamma) = \{(x,y) \mid xRy\}$. Since the group $G$ preserves the relation $R$, it acts on the digraph $\Gamma$ as a group of automorphisms, this action is vertex-transitive.
    Suppose that $R$ is not a symmetric relation. Since $G$ is transitive, this implies that  for every vertex $u_0$ there exists an arc $(u_0, u_1) \in A(\Gamma)$, such that $(u_1, u_0) \notin A(\Gamma)$. Since $X$ is finite, there exists a cycle $(v_0, v_1, \ldots, v_n, v_0)$ in $\Gamma$, such that $(v_{i}, v_{i+1}) \in A(\Gamma)$ and $(v_{i+1}, v_i) \notin A(\Gamma)$ for all $i \in \{0, \ldots, n\}$, where indices are computed modulo $n$. However, by transitivity  of the  relation $R$ we see that $(v_1, v_0) \in A(\Gamma)$, which is a contradiction, implying that $R$ is a symmetric relation. Since $R \neq \emptyset$, transitivity of $G$ implies that for every $u \in X$ there exists $v \in X$ such that $uRv$. Then $vRu$, and by transitivity of $R$, also $uRu$. This shows that $R$ is reflexive and thus an equivalence relation.
\end{proof}

\begin{lemma}
    Let $\Gamma$ be a graph, let $G \leq \Aut(\Gamma)$ and let $\a = (v_0, \ldots, v_n)$ be a $G$-consistent walk. Then the relation $\sim_{\a}$ is an equivalence relation.
\end{lemma}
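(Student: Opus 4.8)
The statement to prove is that $\sim_{\a}$ is an equivalence relation on $\hat{\a}^G$. The plan is to invoke Lemma~\ref{ekv}: I need to exhibit a finite group acting transitively on a set and a transitive, nonempty, $G$-invariant binary relation whose associated equivalence relation is $\sim_{\a}$.

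\medskip

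\textbf{Setup.} Take $X = \hat{\a}^G$, the $G$-orbit of the truncated walk $\hat{\a}$. By definition $G$ acts transitively on $X$, and $X$ is nonempty and finite (as $\Gamma$ is finite). Now I would define a relation $\R$ on $X$ directly in terms of single successor steps rather than sequences: for $\beta, \gamma \in X$, set $\beta \R \gamma$ if and only if $\beta = \gamma$ or there is a walk $\beta' \in \a^G$ with $\hat{\beta'} = \beta$ such that some $G$-successor $\gamma'$ of $\beta'$ satisfies $\hat{\gamma'} = \gamma$. (Including the diagonal makes $\R$ reflexive from the outset, which is harmless and convenient; alternatively one checks $\R$ is already reflexive because $\a \in \Succ(\a)$ can be arranged, but adding the diagonal avoids fuss.) The relation $\sim_{\a}$ is then precisely the transitive closure of $\R$.

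\medskip

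\textbf{Key steps.} First I would check that $\R$ is nonempty and $G$-invariant. Nonemptiness is immediate from reflexivity. For $G$-invariance: if $\beta \R \gamma$ via $\beta' \in \a^G$ and its successor $\gamma'$, then for $x \in G$ we have $(\beta')^x \in \a^G$ with $\widehat{(\beta')^x} = \beta^x$, and $(\gamma')^x$ is a $G$-successor of $(\beta')^x$ (conjugating a shunt by $x$ maps one successor relation to the other, as noted right after the definition of $G$-consistency in Section~\ref{s2}), with $\widehat{(\gamma')^x} = \gamma^x$; hence $\beta^x \R \gamma^x$. Next, the transitive closure $\sim_{\a}$ of $\R$ is itself transitive, nonempty, and $G$-invariant (the transitive closure of a $G$-invariant relation is $G$-invariant). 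Now apply Lemma~\ref{ekv} with this group action on $X$ and the relation $\sim_{\a}$: it follows that $\sim_{\a}$ is an equivalence relation. Finally I would note that the relation defined this way coincides with the $\sim_{\a}$ of the Definition: a sequence $\beta_0, \ldots, \beta_m \in \a^G$ with consecutive successor steps and $\hat{\beta_0} = \beta$, $\hat{\beta_m} = \gamma$ is exactly a chain witnessing $\beta \mathrel{\R^{*}} \gamma$, up to absorbing the trivial $m=0$ case into the diagonal.

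\medskip

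\textbf{Main obstacle.} The only real subtlety is bookkeeping: making sure that ``$\beta \R \gamma$'' is genuinely a relation on $X$ (i.e.\ that the choice of lift $\beta' \in \a^G$ of $\beta$ is existentially quantified, so different lifts don't cause trouble) and that its transitive closure is literally $\sim_{\a}$ rather than something slightly larger or smaller. Once the relation is set up so that $\sim_{\a}$ is visibly the transitive closure of a $G$-invariant nonempty relation, Lemma~\ref{ekv} does all the work. I do not expect any genuine difficulty beyond this matching-up of definitions.
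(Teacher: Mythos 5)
Your proposal follows essentially the same route as the paper: check that the relation is nonempty, transitive and $G$-invariant on the orbit $\hat\a^G$, then invoke Lemma~\ref{ekv}. Packaging transitivity as ``the transitive closure of a one-step relation is transitive'' is fine, but the step you defer as mere bookkeeping --- that the transitive closure of your $R$ is \emph{literally} $\sim_\a$ --- is exactly where the only non-obvious content of the paper's proof sits, so it cannot be waved away. The direction that needs an argument is converting an $R$-chain back into a single successor sequence: in consecutive steps $\delta_i \mathrel{R} \delta_{i+1} \mathrel{R} \delta_{i+2}$, the lift of $\delta_{i+1}$ used to witness the second step need not equal the successor produced by the first step, so a priori the chain does not splice into one sequence of walks in $\a^G$ in which each walk is a $G$-successor of the previous one, as the definition of $\sim_\a$ demands. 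The fix is the observation the paper makes inside its transitivity argument: if $\beta',\gamma' \in \a^G$, if some $\gamma'' \in \Succ(\beta')$ satisfies $\hat{\gamma'}=\hat{\gamma''}$, then $\gamma'$ is itself a $G$-successor of $\beta'$. Indeed, any $g\in G$ with $(\beta')^g=\gamma'$ (which exists because both walks lie in the single orbit $\a^G$) must map the first $n$ vertices of $\beta'$ to its last $n$ vertices (these being the first $n$ vertices of $\gamma'$, since $\hat{\gamma'}=\hat{\gamma''}$ and $\gamma''\in\Succ(\beta')$), hence $g\in\Sh(\beta')$ and $\gamma'=(\beta')^g\in\Succ(\beta')$. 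With this one line added, your lifts can always be replaced compatibly, the two relations coincide, and your argument is complete and matches the paper's.
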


\begin{proof}
    Througout this proof we will use the symbol $\sim$ to denote the relation $\sim_\a$ defined on the set $\hat \a^G$.
    The relation is nonempty, since $\hat{\a} \sim \hat{\a}$.

    Let us now show that the relation $\sim$ is transitive. Let $\beta, \gamma, \delta \in \hat{\a}^G$ such that $\beta \sim \gamma$ and $\gamma \sim \delta$. Then there exist sequences of walks $\beta_0, \ldots, \beta_m$, and $\gamma_0, \ldots, \gamma_k$, where  $\beta_i, \gamma_i \in \a^G$, such that $\hat{\beta_0} = \beta, \hat{\beta_m} = \gamma$, $\hat{\gamma_0} = \gamma, \hat{\gamma_k} = \delta$, and that every walk in each of these two sequences is a $G$-successor of the previous walk in the sequence. 
    If $\beta = \gamma$, then $\beta \sim \delta$. If $\beta \neq \gamma$, then $m \geq 1$. Note that since $\hat{\gamma_0} = \hat{\beta_m}$ and since $\gamma_0, \ \beta_{m-1} \in \a^G$, the walk $\gamma_0$ is a $G$-successor of the walk $\beta_{m-1}$. 
    In the sequence of walks $\beta_0, \ldots, \beta_{m-1}, \gamma_0, \ldots, \gamma_k$, every  walk is a $G$-successor of the previous walk, thus $\beta \sim \delta$. This proves that the relation $\sim$ is transitive.

    Let us now prove that the relation $\sim$ is preserved by $G$. Let $g \in G$ and suppose that $\beta \sim \gamma$ for two walks $\beta, \gamma \in \hat{\a}^G$. Let $\beta_0, \ldots, \beta_m$ be the required walk from the definition of $\sim$. Then $\beta_0^g, \ldots, \beta_m^g$ is a sequence of walks in $\a^G$ that shows that $\beta^g \sim \gamma^g$. Hence $G$ preserves the relation $\sim$.
    Since $G$ acts transitively on its orbit $\hat \a^G$, 
    Lemma \ref{ekv} $\sim$ implies that $\sim$ is an equivalence relation.
\end{proof}

The following property of consistent walks will play a crucial role in the forthcoming analysis. Intuitively a $G$-consistent walk $\a$ has this property if every vertex of the graph can be  reached from $\a$ by a sequence of $G$-successors. Hence we shall call it \emph{reachability property} or (R) for short.

\begin{prR}
    Let $\Gamma$ be a graph and let $G$ be an arc-transitive subgroup of $\Aut(\Gamma)$. We say that a $G$-consistent walk $\a$  has Property (R) with respect to $G$, provided that for every $v \in V(\Gamma)$ there exists a sequence of walks $\b_0, \ldots, \b_m$ such that $\b_0 = \a$, the last vertex of the walk $\b_m$ is $v$, and $\b_{i+1} \in \Succ(\b_i)$ for all $i \in \{0, \ldots, m-1\}$.
\end{prR}
    
We will again omit $G$ when it is clear from context, and just say that a walk has Property (R).

\begin{lemma}\label{P1_tr}
    Let $\Gamma$ be a graph, let $G \leq \Aut(\Gamma)$ and let $\a$ be a $G$-consistent walk. Then $\a$  has Property (R) with respect to $G$ if and only if the group $\langle \Sh(\a) \rangle$ generated by all the shunts of $\a$ contained in $G$ acts transitively on $V(\Gamma)$.
\end{lemma}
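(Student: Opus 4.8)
The plan is to identify the set of walks reachable from $\a$ along chains of $G$-successors with a single orbit of $H := \langle \Sh(\a)\rangle$, and then read off both implications at once. Write $\a = (v_0,\dots,v_n)$ and note $\Sh(\a)\neq\emptyset$ since $\a$ is $G$-consistent. First I would record the conjugation identity $\Sh(\a^x) = x^{-1}\Sh(\a)x$ for every $x\in G$: the inclusion $\supseteq$ is exactly the remark in Section~\ref{s2} that $g^x$ is a shunt of $\a^x$ whenever $g$ is a shunt of $\a$, applied to each $g\in\Sh(\a)$, and the reverse inclusion follows by applying that remark to $\a^x$ and $x^{-1}$. Combining this with Lemma~\ref{X}, which gives $\Succ(\b)=\b^{\Sh(\b)}$ for every $G$-consistent walk $\b$, I obtain for all $x\in G$
\[
  \Succ(\a^x)=(\a^x)^{\Sh(\a^x)}=(\a^x)^{x^{-1}\Sh(\a)x}=\a^{\Sh(\a)\,x}.
\]

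Next, by induction on $m\geq 1$ using this identity, a walk $\b$ arises as $\b_m$ in some chain $\b_0=\a,\b_1,\dots,\b_m$ with $\b_{i+1}\in\Succ(\b_i)$ if and only if $\b=\a^w$ for some $w$ in the subsemigroup $S$ of $G$ generated by $\Sh(\a)$; adding the trivial chain ($m=0$, which gives $\a$ itself) shows that the walks reachable from $\a$ by $G$-successors are exactly $\{\a\}\cup\{\a^w:w\in S\}$. Here the one genuinely non-formal point enters: because $G$ is finite, $S$ is actually a subgroup --- for $s\in\Sh(\a)$ a relation $s^i=s^j$ with $i<j$ forces $s^{j-i}=1\in S$ and hence $s^{-1}\in S$, so $S$ contains the identity and is closed under inverses, giving $S=\langle\Sh(\a)\rangle=H$. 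Consequently the reachable set is precisely the orbit $\a^H=\{\a^w:w\in H\}$.

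Finally, the last vertex of $\a^w$ is $v_n^w$, so $\a$ has Property~(R) if and only if for every $v\in V(\Gamma)$ there is $w\in H$ with $v_n^w=v$, i.e.\ if and only if $v_n^H=V(\Gamma)$; since the $H$-orbits partition $V(\Gamma)$ this is in turn equivalent to $H$ acting transitively on $V(\Gamma)$. The step I expect to be the crux is the upgrade from ``arbitrary products of shunts'' to ``the whole group $\langle\Sh(\a)\rangle$'': this is where finiteness of $G$ is genuinely used (a finite subsemigroup of a group is a subgroup), and it is the reason the standing finiteness assumption on $\Gamma$ cannot be dropped here. Everything else is routine bookkeeping organised around the conjugation formula for $\Sh$ and Lemma~\ref{X}.
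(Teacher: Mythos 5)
Your proof is correct and follows essentially the same route as the paper's: both rest on the conjugation identity $\Sh(\gamma^x)=\Sh(\gamma)^x$ together with Lemma \ref{X}, so that chains of $G$-successors correspond exactly to products of shunts, and the reachable walks form the orbit $\a^{\langle\Sh(\a)\rangle}$. Your version is slightly cleaner in that it isolates as a single statement the identification of the reachable set with this orbit, and it makes explicit the point (left implicit in the paper when it writes an arbitrary element of $\langle\Sh(\a)\rangle$ as a product $g_1\cdots g_\ell$ of shunts with no inverses) that by finiteness the subsemigroup generated by $\Sh(\a)$ already equals the subgroup.
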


\begin{proof}
    Let $\a = (v_0, \ldots, v_n)$. Suppose first that $\langle \Sh(\a) \rangle$ acts transitively on $V(\Gamma)$. Let $v \in V(\Gamma)$. If $v = v_n$, then there is nothing to prove. Henceforth assume $v \neq v_n$. By transitivity of $\langle \Sh(\a) \rangle$ there exists an automorphism $g = g_1 \cdots g_\l \in \langle \Sh(\a) \rangle$, where $g_i \in \Sh(\a)$ for all $i \in \{1, \ldots, \l\}$, such that $v_n^g =v$. Let $\b_0 = \a$ and for every $i \in \{1, \ldots, \l \}$ recursively define $h_{i} = g_{\l-i+1}^{h_1 \cdots   h_{i-1}}$ and $\b_i = \a^{h_1   \cdots   h_{i}}$. We will prove that for every $i \in  \{1, \ldots, \l \}$
    \begin{enumerate}
        \item[$($a$)$]  $h_1   \cdots   h_i = g_{\l-i+1}  \cdots   g_\l$,
        \item[$($b$)$] $h_i \in \Sh(\b_{i-1})$.
    \end{enumerate}
    In case when $i = 1$ the claims $($a$)$ and $($b$)$ follow directly from the fact that $h_1 = g_\l \in \Sh(\a) = \Sh(\b_0)$. Suppose that $i \geq 2$ and that $($a$)$ and $($b$)$ hold for all smaller $i$. Then $h_1   \cdots   h_{i-1}   h_i = h_1   \cdots   h_{i-1} g_{\l-i+1}^{h_1   \cdots   h_{i-1}} = g_{\l-i+1}   h_1   \ldots   h_{i-1} = g_{\l-i+1}  g_{\l-i+2}   \cdots   g_\l$,
    so Claim $($a$)$ is true. For any $G$-consistent walk $\gamma$ and an automorphism $f \in G$ note that $\Sh(\gamma^f) = \Sh(\gamma)^f$. 
    Since $g_{\l-i+1} \in \Sh(\a)$ and $h_1   \cdots   h_{i-1}$ maps $\a$ to $\b_{i-1}$, it follows that $h_i = g_{\l-i+1}^{h_1 \cdots h_{i-1}} \in \Sh(\b_{i-1})$, which proves Claim $($b$)$. 
    We proved that for all $i \in \{1, \ldots \l\}$ the walk $\b_i = \a^{h_1   \cdots   h_{i}} = \b_{i-1}^{h_i}$ is a 
    $G$-successor of the walk $\b_{i-1}$ and that 
    $h_1   \cdots   h_\l = g$, thus the last vertex of the walk $\b_\l$ is $v_n^{h_1   \cdots   h_\l} = v_n^{g} = v$. Therefore $\a$  has Property (R).
 
    Suppose now that the walk $\a$  has Property (R). Let $v \neq v_n$ be a vertex in $\Gamma$. We will show that there exists $h \in \langle \Sh(\a) \rangle$ such that $v_n^h = v$. Let $\b_0, \ldots, \b_\l$ be a sequence of walks such that $\b_0 = \a$, the last vertex of the walk $\b_\l$ is $v$, and $\b_{i} = \b_{i-1}^{h_i}$; $h_i \in \Sh(\b_{i-1})$ for all $i \in \{1, \ldots, \l\}$. Let $h = h_1   \cdots   h_\l$. Then $v_n^{h} = v$. 
    For all $i \in \{1, \ldots, \l\}$
    note that $\Sh(\b_i) =\Sh(\b_{i-1})^{h_i}$, and $\langle \Sh(\b_i) \rangle = \langle \Sh(\b_{i-1})^{h_i} \rangle =  \langle \Sh(\b_{i-1}) \rangle$. Thus $\langle \Sh(\b_0) \rangle = \ldots = \langle \Sh(\b_\l) \rangle$. It now follows that $h \in \langle \Sh(\b_0) \rangle = \langle \Sh(\a) \rangle$, hence the group $\langle \Sh(\a) \rangle$ is vertex-transitive.
\end{proof}

\begin{remark}\label{R1}
    Observe that by Lemma \ref{odsek}, $\langle \Sh(\a) \rangle = \langle G_{\hat{\a}},g \rangle$, where $g$ is an arbitrary element of $\Sh(\a)$. Therefore the lemma above is equivalent to saying that for a consistent cycle $\a$ with a shunt $g \in G$, the group $\langle G_{\hat{a}}, g \rangle$ is transitive on $V(\Gamma)$ if and only if $\a$  has Property (R).
\end{remark}

\begin{lemma}\label{dva}
    Let $\Gamma$ be a graph, let $G \leq \Aut(\Gamma)$ and let $\a = (v_0, \ldots, v_{n})$ be a $G$-consistent walk of length $n \geq 2$ with a shunt $g \in \Sh(\a)$. Suppose that the walk $\hat{\a}$  has Property (R). Denote $\l = |\Succ(\a)|$, $k = |\Succ(\hat{\a})|$ and $X = \langle G_{\hat{\a}}, G_{\hat{\a}^g} \rangle$. If any of the Conditions $(a)-(c)$ hold, then the walk $\a$ has Property (R) :
    \begin{enumerate}[{\rm (a)}]
        \item\label{a} there is no integer $i$, such that $\l \leq  i < k$ and $i|k$,
        \item\label{b} the group $X$ acts transitively on the set $\Succ(\hat{\a})$,
        \item\label{c} $X=G_{(\hat{\a}\cap \hat{\a}^g)}$, where $\hat{\a}\cap \hat{\a}^g = (v_1, \ldots, v_{n-1})$.  
    \end{enumerate}
\end{lemma}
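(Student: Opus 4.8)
The plan is to translate all the hypotheses and the conclusion into transitivity statements via Lemma~\ref{P1_tr}, and then reduce everything to a single subgroup inclusion. Set $S=\langle\Sh(\a)\rangle$ and $T=\langle\Sh(\hat\a)\rangle$. Since $\a$ has length at least $2$ we have $\Sh(\a)\subseteq\Sh(\hat\a)$, hence $S\le T$; by Lemma~\ref{P1_tr} the hypothesis that $\hat\a$ has Property~(R) says precisely that $T$ is transitive on $V(\Gamma)$, while proving that $\a$ has Property~(R) is the same as proving that $S$ is transitive. Applying Lemma~\ref{odsek} to $\a$ and to $\hat\a$, both with the shunt $g$, gives $\Sh(\a)=gG_{\hat\a^g}=G_{\hat\a}g$ and $\Sh(\hat\a)=gP$ where $P:=G_{(v_1,\dots,v_{n-1})}=G_{(\hat\a\cap\hat\a^g)}$; consequently $S=\langle g,G_{\hat\a}\rangle=\langle g,G_{\hat\a^g}\rangle$ and $T=\langle g,P\rangle$. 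The key reduction is then: it suffices to show $P\le S$, for then $T=\langle g,P\rangle\le S\le T$, so $S=T$ is transitive and $\a$ has Property~(R).

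The first genuine step is to sandwich $X$ between $G_{\hat\a^g}$ and $P$ inside $S$. Since $G_{\hat\a}=G_{(v_0,\dots,v_{n-1})}$ and $G_{\hat\a^g}=G_{(v_1,\dots,v_n)}$ both fix $v_1,\dots,v_{n-1}$, we get $X=\langle G_{\hat\a},G_{\hat\a^g}\rangle\le P$; and since both $G_{\hat\a}$ and $G_{\hat\a^g}$ lie in $S$, also $X\le S\cap P$. Next I would note, via Lemma~\ref{X}, that $\Succ(\hat\a)=(\hat\a^g)^{P}$ is a single $P$-orbit, on which the stabiliser of $\hat\a^g$ equals $G_{\hat\a^g}$; as $G_{\hat\a^g}\le X\le P$, the $X$-orbit of $\hat\a^g$ has size $[X:G_{\hat\a^g}]$, so $X$ is transitive on $\Succ(\hat\a)$ if and only if $X=P$. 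Thus Condition~(c) gives $X=P$ directly, and Condition~(b) forces $X=P$ as well; in either case $P=X\le S$, which settles those two cases.

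For Condition~(a) the plan is to prove the lower bound $[X:G_{\hat\a^g}]\ge\l$ and then run a short divisibility argument. Write $Q:=G_{\hat\a^g}$; Lemma~\ref{X} gives $k=|\Succ(\hat\a)|=[P:Q]$, and from $Q\le X\le S\cap P\le P$ it follows that $m:=[P:S\cap P]$ divides $k$ and $[S\cap P:Q]=k/m\ge[X:Q]$. To bound $[X:Q]$, consider the product set $G_{\hat\a}\,Q\subseteq X$: since $G_{\hat\a}\cap Q=G_{\a}$, it is the union of $[G_{\hat\a}:G_{\a}]$ distinct left cosets of $Q$, and Lemma~\ref{os} identifies $[G_{\hat\a}:G_{\a}]$ with $|\Succ(\a)|=\l$; as $Q\le X$, all these cosets lie in $X$, so $[X:Q]\ge\l$ and hence $[S\cap P:Q]\ge\l$. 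Now if $S\cap P\neq P$ then $m\ge2$, so $i:=k/m$ is an integer dividing $k$ with $\l\le i<k$, contradicting Condition~(a). Therefore $S\cap P=P$, i.e. $P\le S$, and $\a$ has Property~(R).

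The part I expect to be the main obstacle is getting the group-theoretic skeleton right: recognising that the target is $P\le S$ with $P=G_{(\hat\a\cap\hat\a^g)}$, that $X$ occupies the sandwich $X\le S\cap P\le P$, and that all three hypotheses act through the index $[P:S\cap P]$ — Conditions~(b) and~(c) by pinning $X$ (hence $S\cap P$) equal to $P$, and Condition~(a) through the bound $[X:G_{\hat\a^g}]\ge\l$, which itself rests on counting left cosets of $G_{\hat\a^g}$ in the product $G_{\hat\a}\cdot G_{\hat\a^g}$ together with the identity $[G_{\hat\a}:G_{\a}]=\l$ coming from Lemma~\ref{os}. Once this framework is in place, the divisibility step for~(a) and the orbit computation for~(b) and~(c) are routine.
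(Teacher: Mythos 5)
Your proof is correct, but it takes a genuinely different route from the paper's. The paper works at the level of walks: it introduces the equivalence relation $\sim_{\a}$ on $\hat{\a}^G$, shows under each of (a)--(c) that every element of $\Succ(\hat{\a})$ is $\sim_{\a}$-equivalent to $\hat{\a}$ (for (a) this is a block-size divisibility count, for (b) and (c) an orbit argument), and then concludes by explicitly stitching together the witnessing successor sequences to extend a Property~(R) path for $\hat{\a}$ to one for $\a$. You instead push everything through Lemma~\ref{P1_tr} into pure group theory: with $S=\langle\Sh(\a)\rangle$, $T=\langle\Sh(\hat{\a})\rangle$ and $P=G_{(v_1,\dots,v_{n-1})}$, you observe $S\le T=\langle g,P\rangle$ and reduce the whole lemma to the single inclusion $P\le S$, which you verify via the sandwich $G_{\hat{\a}^g}\le X\le S\cap P\le P$ --- conditions (b) and (c) forcing $X=P$ outright, and condition (a) via the coset count $[X:G_{\hat{\a}^g}]\ge[G_{\hat{\a}}:G_{\a}]=\l$ together with the tower $[P:G_{\hat{\a}^g}]=[P:S\cap P]\,[S\cap P:G_{\hat{\a}^g}]=k$. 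I checked the individual steps ($\Sh(\a)=G_{\hat{\a}}g=gG_{\hat{\a}^g}$, hence $X\le S$; $\Succ(\hat{\a})=(\hat{\a}^g)^{P}$ with point stabiliser $G_{\hat{\a}^g}$; the $|HK|$ coset count with $G_{\hat{\a}}\cap G_{\hat{\a}^g}=G_{\a}$) and they all hold; the vertex-transitivity hypothesis of Lemma~\ref{os} is supplied by Property~(R) for $\hat{\a}$. Your version buys a shorter argument that avoids both the relation $\sim_{\a}$ and the sequence-concatenation at the end, and it makes the equivalence of (b) and (c) transparent (the paper only records this in a remark via a Frattini argument); the underlying counting for case (a) is the same in both proofs, expressed as block sizes there and as subgroup indices here. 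The paper's formulation has the advantage of being constructive at the level of successor sequences, which matches how Property~(R) is defined and reused elsewhere.
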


\begin{proof}
    Denote $\rho = \hat{\a}$, $\tau = (v_1, \ldots, v_n)$,  and $S = \Succ(\rho)$. Note that $S = \tau^{G_{\rho \cap \tau}}$ by Lemma \ref{X}. Let $\R = \tau^{G_{\rho}}$.  Then $\R \subseteq S \subseteq \tau^G = \rho^G$. 
    
    For the sequence $\a, \a^g$ observe that $\a, \a^g \in \a^G$, $\a^g \in \Succ(\a)$, and $\hat{\a^g} = \hat\a^g = \tau$, thus by definition
    \begin{align}
        \label{DokazL1}
        \rho \sim_\a \tau.
    \end{align}

    Now take any walk in $R$. By the definition of $R$ we can write it as $\tau^{h}$ for some $h \in G_{\rho}$. Since $G$ preserves the relation $\sim_{\a}$, it follows from (\ref{DokazL1}) that $\rho = \rho^h \sim_\a \tau^{h}$. Therefore $R \subseteq [\rho]_{\sim_{\a}}$ (where $[\chi]_{\sim_\a}$ denotes the equivalence class of $\chi$ with respect to $\sim_\a$). Since $R \subseteq S$, we see that 
    \begin{align}
        \label{DokazL2}
        R \subseteq S \cap [\rho]_{\sim_{\a}}.
    \end{align}

    On the other hand,
    $G_{\rho}$ acts on $S$ and preserves the relation $\sim_\a$,
    hence $S \cap [\rho]_{\sim_{\a}}$ is a block for the action of $G_{\rho}$ on $S$. In particular
\begin{align}
        \label{DokazL3}
        |S \cap [\rho]_{\sim_{\a}}| \mathrel{|} |S|.
    \end{align}
    \medskip
    \hspace{-1px}\textbf{Claim 1:}  $S \cap [\rho]_{\sim_{\a}} = S.$
    
    Suppose first that the Condition \ref{a} holds. Recall that $|S| = k$ and that $G_{\rho}g = \Sh(\a)$ by Lemma \ref{odsek}. It follows that $|R| =  | \tau^{G_\rho}| =  |\a^{G_\rho}| =  |\a^{G_{\rho}g}| = |\a^{\Sh(\a)}| = |\Succ(\a)|= \l$. If we let $i = |S \cap [\rho]_{\sim_{\a}}|$, we see that $i | k$ by (\ref{DokazL3}), and by (\ref{DokazL2}), $\l \leq i$, which implies $i = k$, as claimed.

    Suppose now that the Condition \ref{b} holds. Note that since $G_{\rho}$ preserves the relation $\sim_{\a}$, $G_{\rho}$ fixes $[\rho]_{\sim_{\a}}$ setwise. Similarly $G_{\tau}$ fixes $[\tau]_{\sim_{\a}}$ setwise. Since $[\rho]_{\sim_{\a}}=[\tau]_{\sim_{\a}}$ by ($\ref{DokazL1}$), it now follows that $X$ preserves $[\rho]_{\sim_{\a}}$ setwise. By assumption $X$ acts transitively on $S$. Since $\rho \sim_{\a} \tau \in S$, we see that $S \subseteq [\rho]_{\sim_{\a}}$, as claimed. 

    Suppose that the Condition \ref{c} holds. The group $X=G_{(\rho\cap \tau)}$ acts transitively on the set $\Succ(\rho)$, hence \ref{c} implies \ref{b} and $S \subseteq [\rho]_{\sim_{\a}}$, as claimed. This completes the proof of Claim 1.

    We showed that  $\rho \sim_{\a} \nu$ for all $\nu \in \Succ(\rho)$. Hence for every $\beta \in \rho^G$ and every $\gamma \in \Succ(\beta)$, we see that $\gamma \sim_{\a} \beta$.

    Let  $v \in V(\Gamma)$. By assumption, the walk $\rho$  has Property (R), and hence $\tau$  has Property (R). Thus by definition there exists a sequence of walks $\b_0, \ldots, \b_m$, such that $\b_0 = \tau$, $\b_{i+1} \in \Succ(\b_i)$ for all $i \in \{0, \ldots, m-1\}$, and the last vertex of the walk $\b_m$ is $v$. We need to show that there exists a sequence of walks (that are for $1$ longer than $\b_i$) starting with $ \a$ such that every walk in the sequence is a $G$-successor of the previous walk, and that the last vertex of the last walk in the sequence is $v$. If $v = v_n$, there is nothing to prove. 
    Assume now that $v \neq v_{n}$. Then $m \geq 1$. Let $j \in \{0, \ldots, m-1\}$. By definition of the relation $\sim_\a$, there exists a sequence of walks $\gamma_{j,0}, \ldots, \gamma_{j,t_j}$, where $\gamma_{j,s} \in \a^G$ for all $s \in \{0, \ldots, t_j\}$, $\hat{\gamma_{j,0}} = \b_j$, $\hat{\gamma_{j,t_j}} = \b_{j+1}$ and every walk in the sequence is a $G$-successor of the previous walk in the sequence. Since $\b_{j+1} \in \Succ(\b_j)$, it follows that $\b_{j+1} \neq \b_j$, and therefore $t_j \geq 1$ for all $j \in \{0, \ldots, m-1\}$.

    Observe that the sequence of walks $$\a, \gamma_{0,0}, \ldots, \gamma_{0,t_1-1}, \gamma_{2,0}, \ldots, \gamma_{2,t_2-1}, \ldots, \gamma_{m-1,0}, \ldots, \gamma_{m-1,t_{m-1}-1}$$ is such that the first walk in the sequence is $\a$, the last vertex of the last walk is $v$, and every walk is a $G$-successor of the previous one, which proves that $\a$  has Property (R). 
\end{proof}

\begin{remark}
    In the above proof we proved that  Condition \ref{c} implies Condition \ref{b}. However, using the permutation group version of the Frattini argument, one can easily see that Conditions \ref{b} \ref{c} from Lemma \ref{dva} are equivalent.
\end{remark}

\begin{lemma}\label{L5}
    Let $G$ be a subgroup of $Aut(\Gamma)$ for a graph $\Gamma$. Suppose that $\a$  has Property (R) and that $G_{\a}$ fixes $\Succ(\a)$ pointwise. Then $G_\a = 1$.
\end{lemma}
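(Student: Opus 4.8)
The plan is to fix an arbitrary $x \in G_{\a}$ and prove that $x$ fixes every vertex of $\Gamma$; since a graph automorphism that fixes every vertex is trivial, this gives $G_{\a} = 1$. The idea is to push the hypothesis along successor sequences, so the first task is to observe that it actually holds along the whole orbit $\a^G$, not just at $\a$.

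To that end I would record two equivariances: for every $h \in G$ we have $G_{\a^h} = (G_{\a})^h$, and, using Lemma~\ref{X} together with $\Sh(\a^h) = \Sh(\a)^h$, also $\Succ(\a^h) = \a^{h\Sh(\a)^h} = (\a^{\Sh(\a)})^h = \Succ(\a)^h$. Conjugating the assumption ``$G_{\a}$ fixes $\Succ(\a)$ pointwise'' by $h$ therefore yields that $G_{\b}$ fixes $\Succ(\b)$ pointwise for every walk $\b \in \a^G$. This strengthened statement is exactly what the induction needs.

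Now let $v \in V(\Gamma)$. Since $\a$ has Property (R), there is a sequence of walks $\b_0 = \a, \b_1, \ldots, \b_m$ with $\b_{i+1} \in \Succ(\b_i)$ for all $i$ and with the final vertex of $\b_m$ equal to $v$; moreover each $\b_i$ is $G$-consistent and, because $\Succ(\b_i) = \b_i^{\Sh(\b_i)} \subseteq \b_i^G$, an immediate induction gives $\b_i \in \a^G$ for all $i$. I then claim $x \in G_{\b_i}$ for every $i$, by induction on $i$: for $i = 0$ this is $x \in G_{\a} = G_{\b_0}$; and if $x \in G_{\b_i}$, then since $G_{\b_i}$ fixes $\Succ(\b_i)$ pointwise and $\b_{i+1} \in \Succ(\b_i)$, we get $\b_{i+1}^x = \b_{i+1}$, i.e.\ $x \in G_{\b_{i+1}}$. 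In particular $x \in G_{\b_m}$, so $x$ fixes the last vertex $v$ of $\b_m$. As $v$ was arbitrary, $x$ fixes all of $V(\Gamma)$ and hence $x = 1$, proving $G_{\a} = 1$.

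The only real content lies in the first step — realizing that the hypothesis, though stated only for $\a$, automatically transfers to every walk in $\a^G$, which is what makes the pointwise-fixing property survive each successor step. After that the argument is a short double induction, and I do not anticipate any genuine obstacle.
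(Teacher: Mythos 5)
Your proof is correct, but it takes a different route from the paper's. The paper argues at the level of the group: from the hypothesis it deduces $G_\a \subseteq G_{\a^g} = G_\a^g$ for every shunt $g$, hence (by finiteness) $G_\a = G_\a^g$, so $\langle \Sh(\a)\rangle$ normalizes $G_\a$; since Lemma~\ref{P1_tr} converts Property (R) into transitivity of $\langle \Sh(\a)\rangle$, the fixed-vertex set of $G_\a$ is invariant under a transitive group and therefore equals $V(\Gamma)$, forcing $G_\a = 1$. You instead work at the level of a single element $x \in G_\a$ and induct directly along the successor chains supplied by the combinatorial definition of Property (R), after first transferring the pointwise-fixing hypothesis to every walk in $\a^G$ by conjugation. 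Your version bypasses Lemma~\ref{P1_tr} entirely and is more elementary and self-contained; the paper's version is shorter and isolates the structural fact (normalization of $G_\a$ by all shunts) that makes the conclusion immediate once transitivity is available. Both the equivariance computations ($G_{\a^h} = (G_\a)^h$, $\Succ(\a^h) = \Succ(\a)^h$) and the two inductions in your write-up check out.
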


\begin{proof}
    Let $g \in \Sh(\a)$. Then $\a^g \in \Succ(\a)$, thus by assumption $G_\a \subseteq G_{\a^g} = G_\a^g$. Since $|G_\a| = |G_\a^g|$, observe that $G_\a = G_\a^g$, thus $\langle \Sh(\a) \rangle$ normalizes the group $G_\a$.
    But by Theorem \ref{P1_tr} the group $\langle\Sh(\a)\rangle$ acts transitively on $V(\Gamma)$, therefore $G_\a = 1$. 
\end{proof}

\begin{corollary}\label{glavni_izrek}
    Let $\Gamma$ be a connected graph, let $G$ be an arc-transitive subgroup of $\Aut(\Gamma)$, and let $r \geq 1$.
     Let $\a = (v_0, \ldots, v_{r})$ be a $G$-consistent walk with a shunt $g \in \Sh(\a)$.  For every $n \in \{1, \ldots, r\}$ let $\a_n = (v_0, \ldots, v_n)$ and $k_n = |\Succ(\a_n)|$. 
     If $G_{\a}$ fixes all elements of the set $\Succ(\a)$ and if for every $n\in\{1,\ldots,r-1\}$ 
     at least one of the following Conditions $(a) - (c)$ holds, then $G_{\a} = 1$.
     \begin{enumerate}[{\rm (a)}]
         \item\label{aa}There is no integer $i$ such that $k_{n+1} \leq i < k_{n}$ and that $i | k_{n}$
        \item\label{bb}The group $\langle G_{\a_n}, G_{\a_n^g} \rangle$ acts transitively on the set $\Succ(\a_n)$
        \item\label{cc}$\langle G_{\a_n}, G_{\a_n^g} \rangle=G_{({\a_n}\cap{\a_n}^g)}$, where  $\a_n\cap{\a_n}^g = (v_1, \ldots, v_n)$.
     \end{enumerate}
\end{corollary}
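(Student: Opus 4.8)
The plan is to show that the full walk $\a = \a_r$ has Property (R) with respect to $G$; once that is established, Lemma \ref{L5} applies directly — by hypothesis $G_{\a}$ fixes $\Succ(\a)$ pointwise — and yields $G_{\a} = 1$. So the whole corollary reduces to proving Property (R) for $\a_r$, which I would do by induction on $n$, showing that every truncation $\a_n = (v_0,\ldots,v_n)$, $n \in \{1,\ldots,r\}$, has Property (R). As a preliminary remark, each $\a_n$ is itself $G$-consistent with the same shunt $g$: from $v_i^g = v_{i+1}$ for all $i \in \{0,\ldots,r-1\}$ we get $v_i^g = v_{i+1}$ for all $i \in \{0,\ldots,n-1\}$, i.e. $g \in \Sh(\a_n)$.

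For the base case $n = 1$, consider the arc $\a_1 = (v_0, v_1)$. By Remark \ref{R1} (equivalently, by Lemma \ref{odsek}), $\langle \Sh(\a_1)\rangle = \langle G_{v_0}, g\rangle$. Arc-transitivity of $G$ makes $G_{v_0}$ transitive on $\Gamma(v_0)$, and since $v_0^g = v_1 \in \Gamma(v_0)$ the orbit of $v_0$ under $\langle G_{v_0}, g\rangle$ contains $\Gamma(v_0)$; being invariant under this group, the orbit is then a union of connected components of $\Gamma$, hence all of $V(\Gamma)$ by connectedness. Thus $\langle \Sh(\a_1)\rangle$ is vertex-transitive, and Lemma \ref{P1_tr} gives Property (R) for $\a_1$. (When $r = 1$ the list of Conditions is vacuous and the argument already terminates here.)

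For the inductive step, suppose $\a_n$ has Property (R) for some $n \in \{1,\ldots,r-1\}$ and apply Lemma \ref{dva} to the $G$-consistent walk $\a_{n+1} = (v_0,\ldots,v_{n+1})$ of length $n+1 \geq 2$ with shunt $g$. Its truncation $\widehat{\a_{n+1}}$ is precisely $\a_n$, which has Property (R) by induction, so the hypothesis of Lemma \ref{dva} is met; moreover, in the notation of that lemma one has $\l = |\Succ(\a_{n+1})| = k_{n+1}$, $k = |\Succ(\a_n)| = k_n$, $X = \langle G_{\a_n}, G_{\a_n^g}\rangle$ with $\a_n^g = (v_1,\ldots,v_{n+1})$, and $\widehat{\a_{n+1}}\cap\widehat{\a_{n+1}}^g = (v_1,\ldots,v_n) = \a_n\cap\a_n^g$. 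Hence Conditions (a), (b), (c) of Lemma \ref{dva} coincide, term by term, with Conditions (aa), (bb), (cc) of the corollary for this $n$, and since one of them holds, Lemma \ref{dva} gives Property (R) for $\a_{n+1}$. The induction thus reaches $\a = \a_r$, and Lemma \ref{L5} completes the proof.

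I do not expect a genuine obstacle: the substantive content has already been packaged into Lemmas \ref{dva} and \ref{L5}, so the only real work in this corollary is the base case. The one point there that deserves a line of justification is the familiar fact that a group generated by a vertex stabilizer $G_{v_0}$ together with an element moving $v_0$ to a neighbour acts transitively on a connected graph. Everything else is bookkeeping — chiefly verifying that $\widehat{\a_{n+1}} = \a_n$ and $\a_n^g = (v_1,\ldots,v_{n+1})$ so that the hypotheses of Lemma \ref{dva} line up exactly with (aa)--(cc).
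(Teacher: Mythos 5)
Your proof is correct and follows essentially the same route as the paper: establish Property (R) for the initial arc using arc-transitivity and connectedness, propagate it along the truncations $\a_n$ by induction via Lemma \ref{dva} (whose conditions match (a)--(c) of the corollary exactly as you check), and conclude with Lemma \ref{L5}. The only cosmetic difference is that you justify the base case through the group-theoretic reformulation of Lemma \ref{P1_tr}, whereas the paper argues directly from the definition of $G$-successors.
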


\begin{proof}
    The group $G$ acts arc-transitively on $\Gamma$, thus for an arc $(u_0, u_1)$ every arc of the form $(u_1, u_2)$ is its $G$-successor. Since $\Gamma$ is a connected graph, the arc $(v_0, v_1)$  has Property (R). By induction it then follows from Lemma \ref{dva} that $\a$  has Property (R), hence $G_{\a} = 1$ by Lemma \ref{L5}. 
\end{proof}

The following lemma provides an equivalent condition for a $G$-successor of a walk $\a$ to be fixed by $G_\a$, we will use it in Sections \ref{s4} and \ref{s5}.

\begin{lemma}\label{alt}
    Let $\Gamma$ be a graph and let $G\leq \Aut(\Gamma)$.
     Suppose that $\a = (v_0, \ldots, v_{r})$ is a $G$-consistent walk and that $\b = (v_1, \ldots, v_{r+1})$ is a $G$-successor of $\a$. Then $G_{\a}$ fixes $\b$ if and only if $|\Succ(v_0, \ldots, v_r, v_{r+1})| = 1$.
\end{lemma}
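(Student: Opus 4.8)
The plan is to unravel both directions through the chain of equalities $\Sh(\a)=G_{\hat\a}\,g$ from Lemma~\ref{odsek} and the description of $\Succ$ from Lemma~\ref{X}, reducing the claim to a statement about the orbit of a single walk under a stabilizer. Write $\delta = (v_0,\ldots,v_r,v_{r+1})$, so that $\hat\delta = \a$ and (since $\b$ is a $G$-successor of $\a$) the walk $\delta$ is $G$-consistent, say with shunt $h\in\Sh(\delta)\subseteq\Sh(\a)$; note $v_{r+1}=v_r^{\,h}$ up to relabelling, and $\b=\a^{h}$. By Lemma~\ref{X} applied to $\delta$, we have $\Succ(\delta)=\delta^{\Sh(\delta)}=\b'^{\,G_{(v_1,\ldots,v_r)}}$ where $\b'$ is the forward shift of $\delta$; equivalently, by the Orbit–Stabilizer argument in the proof of Lemma~\ref{os}, $|\Succ(\delta)| = |G_{(v_1,\ldots,v_r,v_{r+1})}\backslash G_{(v_1,\ldots,v_r)}|$, the index of the stabilizer of the last vertex $v_{r+1}$ inside $G_{(v_1,\ldots,v_r)}$.

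First I would treat the direction ``$|\Succ(\delta)|=1 \Rightarrow G_\a$ fixes $\b$''. If $|\Succ(\delta)|=1$, then by the index computation above $G_{(v_1,\ldots,v_r)}$ fixes $v_{r+1}$, i.e.\ $G_{(v_1,\ldots,v_r)} = G_{(v_1,\ldots,v_r,v_{r+1})}$. Now $\b = (v_1,\ldots,v_{r+1})$, and an element $x\in G_\a = G_{(v_0,\ldots,v_r)}$ fixes $\b$ iff it fixes $v_{r+1}$ (it already fixes $v_1,\ldots,v_r$). But $G_\a \leq G_{(v_1,\ldots,v_r)} = G_{(v_1,\ldots,v_r,v_{r+1})}$, so indeed every element of $G_\a$ fixes $v_{r+1}$, hence fixes $\b$.

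For the converse, suppose $G_\a$ fixes $\b$; I want $|\Succ(\delta)|=1$, i.e.\ $G_{(v_1,\ldots,v_r)}$ fixes $v_{r+1}$. The key move is to translate the hypothesis across a shunt of $\a$. Pick $g\in\Sh(\a)$; then $G_{(v_1,\ldots,v_r)} = G_\a^{\,g^{-1}} \cdot$—more precisely, by Lemma~\ref{odsek}, $g^{-1}G_\a g = g^{-1} G_{(v_0,\ldots,v_{r-1})}\,\cap\, G_{(v_1,\ldots,v_r)}$... so instead I would argue directly: $G_\a$ fixing $\b=(v_1,\ldots,v_{r+1})$ means $G_\a$ fixes $v_{r+1}$, so $G_\a \leq G_{(v_1,\ldots,v_r,v_{r+1})}$. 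Since also trivially $G_{(v_1,\ldots,v_r,v_{r+1})}\leq G_{(v_1,\ldots,v_r)}$, and $\Succ(\delta)$ is the orbit of the last vertex of the shift under $G_{(v_1,\ldots,v_r)}$, it suffices to show $G_{(v_1,\ldots,v_r)}$ already fixes that vertex; here I use that $\delta$ is $G$-consistent with shunt $h$, so $h\in\Sh(\a)$ and $G_{(v_1,\ldots,v_r)} = h\,G_{(v_2,\ldots,v_{r+1})}\,h^{-1}$ combined with $h\,G_{(v_1,\ldots,v_r)}\,h^{-1}$-type manipulations reduce $|\Succ(\delta)|=1$ to $|\a^{G_\a}|$-counting via Lemma~\ref{os} applied with the walk $\delta$: indeed Lemma~\ref{os} gives $|G_{\hat\delta}| = |G_\delta|\,|\Succ(\delta)|$, i.e.\ $|G_\a| = |G_\delta|\,|\Succ(\delta)|$, and separately $G_\a$ fixing $\b$ forces $G_\a = G_\delta$ (an automorphism fixing $v_0,\ldots,v_r$ and fixing $\b$ pointwise fixes $v_{r+1}$, hence lies in $G_\delta$; the reverse inclusion is automatic). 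Combining, $|\Succ(\delta)| = |G_\a|/|G_\delta| = 1$.

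The main obstacle is being careful about which tuple is being stabilized and in which direction walks are shifted, since $\Succ$ is defined via the \emph{forward} shift while $\hat{\phantom{a}}$ strips the \emph{last} vertex; the cleanest route, as sketched, is to apply Lemma~\ref{os} to the walk $\delta$ itself, turning the whole statement into the single equivalence $G_\a = G_\delta \iff G_\a$ fixes $\b$, which is essentially immediate once one observes $\b$ determines $v_{r+1}$ and $G_\delta$ is exactly the subgroup of $G_\a$ fixing $v_{r+1}$.
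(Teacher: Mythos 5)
Your closing paragraph does contain the correct argument, and it is essentially the paper's: apply Lemma~\ref{os} (or, more directly, Lemma~\ref{odsek}) to $\delta=(v_0,\ldots,v_r,v_{r+1})$ to get $|\Succ(\delta)|=|\delta^{\Sh(\delta)}|=|\delta^{G_{(v_0,\ldots,v_r)}h}|=|\delta^{G_{\a}}|=[G_{\a}:G_{\delta}]$, and observe that $G_{\a}=G_{\delta}$ is equivalent to $G_{\a}$ fixing $v_{r+1}$, i.e.\ fixing $\b$. However, the index computation you state at the outset and then lean on is wrong: you assert that $|\Succ(\delta)|$ equals the index of the stabilizer of $v_{r+1}$ inside $G_{(v_1,\ldots,v_r)}$. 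That index is $|\Succ(\a)|$, not $|\Succ(\delta)|$ --- indeed Lemma~\ref{X} applied to $\a$ gives $\Succ(\a)=\b^{G_{(v_1,\ldots,v_r)}}$, whose size is exactly $[G_{(v_1,\ldots,v_r)}:G_{(v_1,\ldots,v_r,v_{r+1})}]$. The correct ambient group for $|\Succ(\delta)|$ is $G_{\hat\delta}=G_{(v_0,\ldots,v_r)}=G_{\a}$, coming from the left-coset form $\Sh(\delta)=G_{\hat\delta}h$ of Lemma~\ref{odsek}. The distinction is not cosmetic: in the setting of Theorem~\ref{weaklypsub} one has walks with $|\Succ(\delta)|=1$ but $|\Succ(\a)|=p>1$, so your intermediate claim ``$|\Succ(\delta)|=1\Rightarrow G_{(v_1,\ldots,v_r)}$ fixes $v_{r+1}$'' is genuinely false, and your reformulation of the converse as ``show $G_{(v_1,\ldots,v_r)}$ fixes $v_{r+1}$'' asks for something strictly stronger than, and not implied by, the hypothesis.

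The damage is contained: in the forward direction the only consequence you ultimately use is that $G_{\a}\leq G_{(v_1,\ldots,v_r)}$ fixes $v_{r+1}$, which does follow once the index is computed in $G_{\a}$; and in the converse you abandon the false reformulation mid-stream and fall back on $|G_{\a}|=|G_{\delta}|\,|\Succ(\delta)|$ together with ``$G_{\a}$ fixes $\b$ iff $G_{\a}=G_{\delta}$'', which is correct. So the fix is simply to delete the erroneous identification and the conjugation detour, and keep only the final two-line argument; after that repair your proof coincides with the paper's. One further small point: Lemma~\ref{os} is stated under a vertex-transitivity hypothesis that Lemma~\ref{alt} does not assume; its proof never uses that hypothesis, but citing Lemma~\ref{odsek} directly, as the paper does, sidesteps the issue entirely.
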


\begin{proof}
    Since the walk $(v_0, \ldots, v_r, v_{r+1})$ is $G$-consistent, there exists an automorphism $g \in \Sh(v_0, \ldots, v_r, v_{r+1})$. By Lemma \ref{odsek}, $|\Succ(v_0, \ldots, v_r, v_{r+1})| = |(v_0, \ldots, v_r, v_{r+1})^{G_(v_0, \ldots, v_{r})g}| = |(v_0, \ldots, v_r, v_{r+1})^{G_(v_0, \ldots, v_{r})}|$. Observe that $|(v_0, \ldots, v_r, v_{r+1})^{G_(v_0, \ldots, v_{r})}| = 1$ if and only if $G_{\a}$ fixes the walk $\b$, hence $G_{\a}$ fixes the walk $\b$ if and only if $|\Succ(v_0, \ldots, v_r, v_{r+1})| = 1$.
\end{proof}

\section{Graphs with locally weakly $p$-subregular group actions}\label{s4}

In this section we will consider $G$-arc-transitive graphs where the local group $G_v^{\Gamma(v)}$, that is the subgroup of $\text{Sym}(\Gamma(v))$ induced by the action of vertex a stabilizer of $G_v$ on the set $\Gamma(v)$, belongs to a class of \emph{weakly $p$-subregular} permutation groups, first defined in \cite{Verret+2013}. As we prove in Theorem \ref{weaklypsub} one can then always find a $G$-consistent walk with a trivial $G$-stabilizer. 

\begin{definition}\label{def_wps}
    Let $p$ be a prime number. A transitive permutation group $L$ is weakly $p$-subregular on the set $\Omega$, if there exist $x, y \in \Omega$ such that $|L_x| = p$ and $x^{\overline{L}} \cup y^{\overline{L}} = \Omega$, where $\overline{L} = \langle L_x, L_y\rangle$.
\end{definition}

Some examples of weakly $p$-subregular permutation groups include the dihedral groups in their natural actions (for $p = 2$), the wreath product $C_p \wr C_2$ in its natural imprimitive action on $\{1, \ldots, p\} \times\{1,2\}$ as well as $\text{SL}(2,p)$ on $\Z_p^2 \setminus \{0\}$, see \cite{Verret+2009, Verret+2013} for further examples.

The following lemma was proved as Claim 1 in the proof of Theorem 2.2 in \cite{Verret+2013}. Note that the formulation here is equivalent to formulation in \cite{Verret+2013}, since 
 $\langle\Sh((x,v,y))\rangle = \langle G_{(x,v)}, g \rangle$, where $g \in G$ is any such automorphism that $(x, v)^g = (v,y)$. 

\begin{lemma}\label{l1wps}
    Let $\Gamma$ be a connected graph, let $G$ be an arc-transitive subgroup of $\Aut(\Gamma)$, let $v \in V(\Gamma)$ and let $L = G_v$. Suppose that there exist $x,y \in \Gamma(v)$, such that $x^{\overline{L}} \cup y^{\overline{L}} = \Gamma(v)$, where $\overline{L} = \langle L_x, L_y\rangle$. Then $\langle \Sh((x,  v, y)) \rangle$ is transitive on $V(\Gamma)$.
\end{lemma}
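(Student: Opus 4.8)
The plan is to prove Lemma~\ref{l1wps} by reducing it to the machinery of Property~(R) and Lemma~\ref{P1_tr}. Set $\rho_0 = (x,v)$, an arc of $\Gamma$; since $G$ is arc-transitive and $\Gamma$ is connected, $\rho_0$ has Property~(R) (this is exactly the base case used in the proof of Corollary~\ref{glavni_izrek}). Pick $g \in G$ with $(x,v)^g = (v,y)$, so that $g$ is a shunt for the walk $\a := (x,v,y)$ and $\langle \Sh(\a)\rangle = \langle G_{(x,v)}, g\rangle$ by Lemma~\ref{odsek}/Remark~\ref{R1}. By Lemma~\ref{P1_tr}, the transitivity of $\langle \Sh(\a)\rangle$ on $V(\Gamma)$ is equivalent to $\a$ having Property~(R), so it suffices to show the latter.

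To get Property~(R) for $\a$ from Property~(R) for $\hat\a = \rho_0$, I would invoke Lemma~\ref{dva} with $n=2$. In the notation of that lemma, $\rho = \hat\a = (x,v)$, $\tau = \hat\a^g = (v,y)$, and $\rho \cap \tau = (v)$, so $X = \langle G_{(x,v)}, G_{(v,y)}\rangle$ and $G_{(\rho\cap\tau)} = G_v = L$. The hypothesis gives $x^{\overline L} \cup y^{\overline L} = \Gamma(v)$ with $\overline L = \langle L_x, L_y\rangle$; note $L_x = G_{(v,x)} = G_{(x,v)}$ and $L_y = G_{(v,y)}$, hence $\overline L = X$. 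Now $\Succ(\hat\a) = \Succ((x,v))$ consists of all arcs $(v,w)$ with $w \in \Gamma(v)$, and $X$ acts on $\Gamma(v)$ with orbit structure on these arcs matching the orbits of $\overline L$ on $\Gamma(v)$. The relation $\rho \sim_\a \tau$ means $(x,v)$ and $(v,y)$ lie in the same $\sim_\a$-class, and the key point in the proof of Lemma~\ref{dva} (Claim~1) is that $S \cap [\rho]_{\sim_\a}$ is a block for the $G_\rho = L_x$-action on $S = \Succ((x,v))$. Translating through the identification of arcs $(v,w)$ with vertices $w$, the $\sim_\a$-class of $(x,v)$ restricted to $S$ corresponds to a union of $L_x$-orbits on $\Gamma(v)$ that is a block, and it contains both $x^{L_x} = \{x\}$-side and, via $\tau$, the orbit of $y$; since it is a block containing $x$ and $\overline L$-related material, one shows it must be all of $\Gamma(v)$ precisely because $x^{\overline L} \cup y^{\overline L} = \Gamma(v)$. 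Concretely: the $\sim_\a$ class, being preserved by both $L_x$ and (after the identification $[\rho]_{\sim_\a} = [\tau]_{\sim_\a}$) by $L_y$, is preserved by $\overline L = X$; as it contains $x$ it contains $x^{\overline L}$, and as it contains $y$ it contains $y^{\overline L}$, so it contains $\Gamma(v)$. This is exactly Condition~\ref{b} of Lemma~\ref{dva} ($X$ transitive on $\Succ(\hat\a)$ is not literally assumed, but the block argument shows $S \cap [\rho]_{\sim_\a} = S$ directly, which is all Claim~1 needs).

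Thus I would either verify Condition~\ref{b} of Lemma~\ref{dva} directly (showing $X$ acts transitively on $\Succ((x,v)) \leftrightarrow \Gamma(v)$ — but this need not hold, since we only know a \emph{union} of two orbits is everything), or, more safely, re-run the short block argument from Claim~1 of Lemma~\ref{dva}: $[\rho]_{\sim_\a} \cap \Succ(\hat\a)$ is a block for $L_x$, it is also (using $[\rho]_{\sim_\a} = [\tau]_{\sim_\a}$) stabilized setwise by $L_y$, hence by $\overline L = X$; since it contains the arc $(v,x)$... wait, it contains $\tau = (v,y)$ and, because $\rho \sim_\a \rho$ trivially is not in $\Succ(\hat\a)$, one argues instead that it contains every $\nu \in \Succ(\hat\a)$ with $\nu \sim_\a \rho$; the $\overline L$-invariance forces this set to contain $x^{\overline L} \cup y^{\overline L} = \Gamma(v)$, so it equals $\Succ(\hat\a)$. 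Then by the argument in Lemma~\ref{dva}, $\a$ has Property~(R), and by Lemma~\ref{P1_tr}, $\langle \Sh(\a)\rangle = \langle \Sh((x,v,y))\rangle$ is transitive on $V(\Gamma)$.

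The main obstacle I anticipate is the bookkeeping around the identification of the successor set $\Succ((x,v))$ with $\Gamma(v)$ and making sure the block/orbit argument is phrased so that a union of two orbits covering $\Gamma(v)$ is genuinely enough — i.e., resisting the temptation to claim full transitivity of $X$ on $\Gamma(v)$, which is false in general (e.g.\ for $L$ dihedral and $p=2$), and instead using only that the $\sim_\a$-class is $X$-invariant and meets both relevant orbits. Once that is set up, everything else is a direct application of Lemmas~\ref{dva} and~\ref{P1_tr}, so the write-up should be short.
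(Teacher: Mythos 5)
Your reduction to Property (R) via Lemma~\ref{P1_tr} and Remark~\ref{R1} is fine, but the core step --- establishing Claim~1 of Lemma~\ref{dva} for $\a=(x,v,y)$, i.e.\ $\Succ((x,v))\subseteq[(x,v)]_{\sim_{\a}}$ --- does not go through, and in fact that intermediate claim is false under the stated hypotheses. The class $[(x,v)]_{\sim_{\a}}$ is indeed $\overline{L}$-invariant and contains both $(x,v)$ and $(v,y)$, hence it contains $(x,v)^{\overline{L}}=\{(x',v):x'\in x^{\overline{L}}\}$ and $(v,y)^{\overline{L}}=\{(v,y'):y'\in y^{\overline{L}}\}$. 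But only the second of these sets lies in $\Succ((x,v))=\{(v,w):w\in\Gamma(v)\}$: the arcs $(x',v)$ point \emph{into} $v$ and are not successors of $(x,v)$. So all you obtain is $y^{\overline{L}}\subseteq\{w:(v,w)\sim_{\a}(x,v)\}$; to also capture $x^{\overline{L}}$ you would need $(v,x)\sim_{\a}(x,v)$, which you never prove and which fails in general --- your phrase ``the $\overline{L}$-invariance forces this set to contain $x^{\overline{L}}\cup y^{\overline{L}}$'' silently conflates the arc $(x,v)$ with the arc $(v,x)$. Concretely, take $\Gamma=C_6$ and $G=D_6$: the hypotheses of the lemma hold with $\overline{L}=1$ and $x^{\overline{L}}\cup y^{\overline{L}}=\Gamma(v)$, but every $2$-arc has a unique shunt, so the successor chain starting at $(x,v,y)$ is forced to travel around the cycle in one direction, $[(x,v)]_{\sim_{\a}}$ consists of the six arcs oriented that way, and $(v,x)\notin[(x,v)]_{\sim_{\a}}$; thus $\Succ((x,v))\cap[(x,v)]_{\sim_{\a}}$ is a proper subset of $\Succ((x,v))$ even though the conclusion of the lemma holds there.

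The missing ingredient is the \emph{inverse} of the shunt, which the forward successor relation cannot see but the generated group can. (Note that the paper does not prove this lemma itself; it cites Claim~1 in the proof of Theorem~2.2 of \cite{Verret+2013}.) The short correct argument is: by Remark~\ref{R1} it suffices to show that $H:=\langle G_{(x,v)},g\rangle$ is transitive on $V(\Gamma)$, where $(x,v)^g=(v,y)$. Since $G_{(v,y)}=G_{(x,v)}^{\,g}\leq H$ and $L_x=G_{(x,v)}$, $L_y=G_{(v,y)}$, we get $\overline{L}\leq H$. The orbit $v^H$ contains $y=v^g$ and $x=v^{g^{-1}}$, and being $\overline{L}$-invariant it contains $x^{\overline{L}}\cup y^{\overline{L}}=\Gamma(v)$; then for any $u=v^h\in v^H$ we have $\Gamma(u)=\Gamma(v)^h\subseteq v^H$, so connectivity gives $v^H=V(\Gamma)$. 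Replacing your $\sim_{\a}$ block argument with this orbit argument closes the gap.
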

 
\begin{theorem}\label{weaklypsub}
let $G$ be an arc-transitive subgroup of the automorphism group of a graph $\Gamma$ of valence at least $3$ and let $v \in V(\Gamma)$. Suppose that  $G_v^{\Gamma(v)}$ is weakly $p$-subregular for a prime $p$. Then there exists a $G$-consistent walk $\a$ with $G_\a = 1$. 
\end{theorem}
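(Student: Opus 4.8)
The plan is to launch from the short walk furnished by Lemma~\ref{l1wps} and then repeatedly lengthen it, always preserving Property~(R) and strictly shrinking the pointwise stabilizer, until that stabilizer is trivial. Concretely, put $L=G_v^{\Gamma(v)}$. By Definition~\ref{def_wps} there are $x,y\in\Gamma(v)$ with $|L_x|=p$ and $x^{\overline L}\cup y^{\overline L}=\Gamma(v)$, where $\overline L=\langle L_x,L_y\rangle$. The latter equality is exactly the hypothesis of Lemma~\ref{l1wps} (here we use that $\Gamma$ is connected), so $\langle\Sh((x,v,y))\rangle$ is transitive on $V(\Gamma)$, and Lemma~\ref{P1_tr} then tells us that the walk $(x,v,y)$ has Property~(R). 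The condition $|L_x|=p$ enters, crucially, through the following divisibility observation.

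\emph{Claim.} For every $G$-consistent walk $\mu=(a_0,\ldots,a_m)$ with $m\ge2$, $|\Succ(\mu)|$ divides $p$; in particular $|\Succ(\mu)|\in\{1,p\}$. Indeed, by Lemma~\ref{os} and the Orbit-Stabilizer Lemma, $|\Succ(\mu)|=|G_{\hat\mu}|/|G_\mu|$ is the length of the orbit of $a_m$ under $G_{(a_0,\ldots,a_{m-1})}$. This last group fixes $a_{m-1}$ and its neighbour $a_{m-2}$ (this is where $m\ge2$ is used), so its image on $\Gamma(a_{m-1})$ is contained in the stabilizer of $a_{m-2}$ in $G_{a_{m-1}}^{\Gamma(a_{m-1})}$. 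Since $G$ is arc-transitive, $G_{a_{m-1}}^{\Gamma(a_{m-1})}$ is permutation-isomorphic to the transitive group $L$, so each of its point stabilizers has order $|L_x|=p$; hence the orbit of $a_m$, and with it $|\Succ(\mu)|$, divides $p$.

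\emph{Extremal argument.} Among all $G$-consistent walks of length at least $2$ that have Property~(R) --- a family that is nonempty, since $(x,v,y)$ belongs to it --- choose $\a=(a_0,\ldots,a_m)$ with $|G_\a|$ as small as possible; it suffices to prove $G_\a=1$. If $G_\a$ fixes $\Succ(\a)$ pointwise, then $G_\a=1$ by Lemma~\ref{L5}. Otherwise, pick $\g\in\Succ(\a)$ not fixed by $G_\a$; since $\g=\a^{g'}$ for some shunt $g'\in\Sh(\a)$, writing $w$ for the last vertex of $\g$ shows that $\a'=(a_0,\ldots,a_m,w)$ is a $G$-consistent walk with $\hat{\a'}=\a$, and Lemma~\ref{alt} gives $|\Succ(\a')|\ne1$, hence $|\Succ(\a')|=p$ by the Claim. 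Also $|\Succ(\a)|\ge2$ (else $G_\a$ would fix the singleton $\Succ(\a)$), so $|\Succ(\a)|=p$ as well. Now Lemma~\ref{dva} applies to $\a'$: its truncation $\a$ has Property~(R), and with $\l=|\Succ(\a')|=p$ and $k=|\Succ(\a)|=p$ there is no integer $i$ satisfying $\l\le i<k$, so Condition~(a) of Lemma~\ref{dva} holds and $\a'$ has Property~(R). But $|G_{\a'}|=|G_\a|/|\Succ(\a')|=|G_\a|/p<|G_\a|$ by Lemma~\ref{os}, contradicting the minimality of $|G_\a|$. Therefore $G_\a=1$, and $\a$ is a $G$-consistent walk with trivial stabilizer.

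The one delicate point is keeping Property~(R) alive when passing from $\a$ to $\a'$, that is, the invocation of Lemma~\ref{dva}. Primality of $p$ is what makes this painless: the Claim forces both successor counts entering Lemma~\ref{dva} to equal $p$, so Condition~(a) is vacuously satisfied. By contrast, if one tried to run the same descent starting from a single arc $(u,v)$, one would have to verify Condition~(b) or~(c) of Lemma~\ref{dva} at the first step, which for an arc amounts to transitivity of $\overline L$ on $\Gamma(v)$ --- precisely what weak $p$-subregularity fails to guarantee. This is why the argument must begin from the length-$2$ walk $(x,v,y)$ supplied by Lemma~\ref{l1wps}.
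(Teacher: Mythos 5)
Your proof is correct and follows essentially the same route as the paper: seed Property (R) with $(x,v,y)$ via Lemma~\ref{l1wps}, use $|L_x|=p$ to force every successor count to be $1$ or $p$ so that Condition~(a) of Lemma~\ref{dva} propagates Property (R), and finish with Lemmas~\ref{alt} and~\ref{L5}. The only (harmless) difference is the extremal choice — you minimize $|G_\a|$ over Property-(R) walks where the paper takes a maximal-length extension of $(x,v,y)$ with $|\Succ(\a)|>1$ — which in fact sidesteps the paper's implicit need to justify that such a maximal walk exists.
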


\begin{proof}
    Let $L = G_v^{\Gamma(v)}$. Then by assumptions there exist vertices $x,y \in V(\Gamma)$ such that $|L_x| = |L_y| = p$ and $x^{\overline{L}} \cup y^{\overline{L}} = \Gamma(v)$, where $\overline{L} = \langle L_x, L_y\rangle$. 
    
    Let $\a$ be a maximal $G$-consistent walk that begins with the sequence $(x,v,y)$ and for which $|\Succ(\a)|>1$.
    We first need to show that such walks exist; we will show that $(x,v,y)$ satisfies the conditions. Since $G$ is arc-transitive, the walk $(x,v,y)$ is $G$-consistent. Recall that $|\Succ(x,v,y)| ={|G_{(x,v)}|} / {|G_{(x,v,y)}|}$ by Lemma \ref{os}. The definition of a weakly $p$ subregular group implies that $L_{x,y} = 1$, and thus ${|G_{(x,v)}^{\Gamma(v)}|} / {|G_{(x,v,y)}^{\Gamma(v)}|} = |{L_x}| / |{L_{x,y}}| = p$. Hence $|G_{(x,v)}| \neq |G_{(x,v,y)}|$, it now follows that  $|\Succ(x,v,y)| > 1$.
    Let $\a = (v_0, \ldots, v_n)$, where $v_0 = x, \ v_1 = v$ and $v_2 = y$, and let $g \in \Sh(\a)$.
    Denote also $v_{n+1} = v_n^g$, $\a_i = (v_0, \ldots, v_i)$ and $k_i = |\Succ(\a_i)|$ for all $i \in \{2, \ldots, n\}$. 
    We will show that for an arbitrary  $i \in \{2, \ldots, n\}$, the walk $\a_i$  has Property (R). Note that $$k_i = |\Succ(\a_i)| = |\a_i^{gG_{(v_1, \ldots, v_i)}}| = |(v_1, \ldots, v_{i+1})^{G_{(v_1, \ldots, v_i)}}| = |v_{i+1}^{G_{(v_1, \ldots, v_i)}}| = |v_{i+1}^{G_{(v_1, \ldots, v_i)}^{\Gamma(v_i)}}|.$$ The second equality follows from Lemma \ref{odsek}, the last equality holds because $G_{(v_1, \ldots,v_i)}$ acts on the set $\Gamma(v_i)$. Now since $(v_0, v_1, v_2)^{g^{i-1}} = (v_{i-1}, v_i, v_{i+1})$, observe that $G_{(v_1, \ldots, v_i)}^{\Gamma(v_i)} \leq G_{(v_{i-1}, v_i)}^{\Gamma(v_i)} = L_x^{g^{i-1}}$. Hence $|G_{(v_1, \ldots, v_i)}^{\Gamma(v_i)}|$ is either $1$ or $p$ and $k_i$ is either $1$ or $p$. By  the definition of $\a$, the parameter $k_i$ can not be $1$, so $k_i = p$ for every $i \in \{2, \ldots, n\}$.
    The walk $(x,v,y)$  has Property (R) by Lemma \ref{l1wps}. It then follows by repeated application of Lemma \ref{dva} (using Condition \ref{a}),  that $\a_i$  has Property (R)   \ 
  for all $i \in \{2, \ldots, n\}$, as claimed. In particular $\a$  has Property (R).

    Let $\b \in \Succ(\a)$, then $\b = \a^h = (v_1, \ldots, v_{n}, u)$ for some $h \in \Sh(\a)$ and $u \in V(\Gamma)$. The walk $(v_0, \ldots v_{n},u)$ is $G$-consistent with a shunt $h \in \Sh(\b)$, and begins with the vertices $x, v, y$. But since $\a$ is maximal such walk with  $|\Succ(\a)| > 1$, it follows that $|\Succ((v_0, \ldots, v_{n}, u))| = 1$. By Lemma \ref{alt}, the stabilizer $G_{\a}$ fixes $\b$ for an arbitrary $\b \in \Succ(\a)$. Hence $G_\a = 1$ by Lemma \ref{L5}.  
\end{proof}

\section{Stabilizers of $G$-consistent walks in $4$-valent graphs}\label{s5}

As the second application of Corollary \ref{glavni_izrek} we show the existence of a $G$-consistent walk with a trivial $G$-stabilizer for an arbitrary $4$-valent graph and an arc transitive $G \leq \Aut(\Gamma)$, in particular we prove the following theorem.

\begin{theorem}\label{4val}
    Let $\Gamma$ be a connected $4$-valent graph and let $G$ be an arc-transitive subgroup of $\Aut(\Gamma)$. Then there exists a $G$-consistent walk $\a$ with $G_\a = 1$.
\end{theorem}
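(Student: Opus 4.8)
The plan is to run a case analysis on the local group $L := G_v^{\Gamma(v)}$, which is a transitive permutation group of degree $4$. Up to permutation isomorphism $L$ is one of $C_4$, $C_2\times C_2$ (both acting regularly), $D_4$, $A_4$ or $S_4$, so the point stabiliser $L_x$ (for $x\in\Gamma(v)$) has order $1$, $2$, $3$ or $6$. Two of these subcases fall to results already in hand, one is essentially immediate, and the last one, $L\cong S_4$, is where the argument has to be made.

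If $L\cong D_4$ or $L\cong A_4$, then $L$ is weakly $p$-subregular with $p=2$, respectively $p=3$: for $D_4$ this is the natural action of a dihedral group, already among the examples following Definition~\ref{def_wps}, while for $A_4$ one may take any two distinct $x,y\in\Gamma(v)$ and observe that $\langle L_x,L_y\rangle=A_4$, so that $x^{\langle L_x,L_y\rangle}=\Gamma(v)$. Hence Theorem~\ref{weaklypsub} already supplies a $G$-consistent walk with trivial stabiliser. If $L$ is regular (the cases $C_4$ and $C_2\times C_2$), then $L_x=1$, and I would simply take $\a=(v_0,v_1)$ to be any arc of $\Gamma$. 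It is $G$-consistent, and since $\Gamma$ is connected and $G$ is arc-transitive it has Property~(R): indeed $\Succ(\a)=\{(v_1,w):w\in\Gamma(v_1)\}$, and reachability then propagates along $\Gamma$. Moreover $G_\a=G_{(v_0,v_1)}$ induces on $\Gamma(v_1)$ the point stabiliser $(G_{v_1}^{\Gamma(v_1)})_{v_0}\cong L_x=1$, so $G_\a$ fixes $\Succ(\a)$ pointwise and Lemma~\ref{L5} gives $G_\a=1$.

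For the remaining case $L\cong S_4$ I would apply Corollary~\ref{glavni_izrek}. Fix an arc $(v_0,v_1)$ and a vertex $v_2\in\Gamma(v_1)\setminus\{v_0\}$; then $(v_0,v_1,v_2)$ is $G$-consistent, and since $(G_{v_1}^{\Gamma(v_1)})_{v_0}\cong S_3$ is transitive on $\Gamma(v_1)\setminus\{v_0\}$ we have $|\Succ(v_0,v_1,v_2)|=3>1$. Let $\a=(v_0,\ldots,v_r)$ be a maximal $G$-consistent walk extending $(v_0,v_1,v_2)$ with $|\Succ(\a)|>1$; such a walk exists because along any chain of one-vertex extensions the stabilisers $G_{\a_i}$ eventually become constant and then $|\Succ(\a_i)|=1$. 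With $k_i=|\Succ(\a_i)|$, arc-transitivity gives $k_1=4$, the choice of $v_2$ gives $k_2=3$, and for $i\ge 2$, since $G_{(v_1,\ldots,v_i)}$ fixes $v_{i-1}\in\Gamma(v_i)$, the group $G_{(v_1,\ldots,v_i)}^{\Gamma(v_i)}$ permutes the three-element set $\Gamma(v_i)\setminus\{v_{i-1}\}$, whence $k_i=|v_{i+1}^{G_{(v_1,\ldots,v_i)}^{\Gamma(v_i)}}|\le 3$. A short stabiliser computation — using $G_{\a_{i+1}}=G_{\a_i}\cap G_{\a_i}^{\,g}$ for a shunt $g$ of $\a$ — shows that $k_i=1$ forces $k_{i+1}=1$, so that $k_r=|\Succ(\a)|>1$ implies $k_i\in\{2,3\}$ for all $i\in\{2,\ldots,r\}$. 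Now whenever $(k_n,k_{n+1})$ is $(4,3)$ or lies in $\{2,3\}^2$ there is no integer $j$ with $k_{n+1}\le j<k_n$ and $j\mid k_n$, so Condition~\ref{aa} of Corollary~\ref{glavni_izrek} holds for every $n\in\{1,\ldots,r-1\}$. Finally, maximality together with Lemma~\ref{alt} shows $G_\a$ fixes $\Succ(\a)$ pointwise: for $\b=\a^h\in\Succ(\a)$ with $h\in\Sh(\a)$ and final vertex $u$, the walk $(v_0,\ldots,v_r,u)$ is a $G$-consistent extension of $(v_0,v_1,v_2)$, hence has $|\Succ(v_0,\ldots,v_r,u)|=1$, so $G_\a$ fixes $\b$. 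Corollary~\ref{glavni_izrek} then yields $G_\a=1$.

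The main obstacle is precisely the case $L\cong S_4$, which is invisible to the weakly $p$-subregular machinery since $|L_x|=6$ is not prime. The crux there is the two structural facts about the successor numbers — the bound $k_i\le 3$ at interior indices and the implication $k_i=1\Rightarrow k_{i+1}=1$ — which pin every $k_i$ into $\{2,3\}$ and thereby make Condition~\ref{aa} of Corollary~\ref{glavni_izrek} available at each step; once that is in place, the argument is the same bootstrap used in the proof of Theorem~\ref{weaklypsub}.
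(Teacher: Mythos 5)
Your proof is correct and follows essentially the same route as the paper: a case analysis on the local group $G_v^{\Gamma(v)}$, with the regular and $D_4$ cases handled exactly as in the paper's proof, and the hard case resolved by the same maximal-walk argument ($k_1=4$, $k_2=3$, $k_i\in\{2,3\}$ thereafter, Condition~(a) of Corollary~\ref{glavni_izrek}, and Lemma~\ref{alt}). The only cosmetic difference is that you dispatch $L\cong A_4$ via weak $3$-subregularity and Theorem~\ref{weaklypsub}, whereas the paper absorbs $A_4$ and $S_4$ into one $2$-arc-transitive case treated by the maximal-walk argument.
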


\begin{proof}
Let $v \in V(\Gamma)$ and $u \in \Gamma(v)$. Then $G_{(u,v)}$ acts on the set $\Omega = \Gamma(v) \setminus \{u\}$, the local group $H = G_{(u,v)}^{\Omega}$ is either trivial, $C_2$, $C_3$ or $S_3$. 

If $H$ is the trivial group, we take $(u,v)$ as $\a$. The walk $\a$  has Property (R) , since $\Gamma$ is connected and $G$ acts transitively on the set of arcs of $\Gamma$. The stabilizer $G_\a$ fixes the set $\Succ(\a)$ pointwise, hence $G_\a = 1$ by Corollary \ref{glavni_izrek}. 

If $H$ is permutation isomorphic to $C_2$, then $G_v^{\Gamma(v)}$ is permutation isomorphic to $D_4$, so it is weakly $p$-subregular on $\Gamma(v)$. A $G$-consistent walk $\a$ with $G_\a = 1$ then exists by Theorem \ref{weaklypsub}.

In case when $H$ is permutation isomorphic to $C_3$ or $S_3$, $G$ acts $2$-arc transitively on $\Gamma$, hence an arbitrary $2$-arc $(u, v, w)$ is a $G$-consistent walk. Let $\a = (v_0, \ldots, v_r)$ be the longest $G$-consistent walk such that $|\Succ(\a)| > 1$, $v_0 = u, v_1 = v$ and $v_2 = w$. We will show that then $\a$ satisfies the conditions from Corollary \ref{glavni_izrek}.
For all $z \in \Gamma(v)$ the walk $(v, z)$ is a $G$-successor of $(u,v)$, since $G$ acts arc-transitively on $\Gamma$. Hence $|\Succ(v,u)| = 4$. Similarly the $G$-successors of the walk $(u,v,w)$ are the walks $(v,w,z)$ for $z \in \Gamma(u) \setminus \{v\}$, since $G$ acts $2$-arc-transitively on $\Gamma$. Hence $|\Succ(v,u,w)| = 3$. 

Let $\a_i = (v_0, \ldots, v_i)$ for all $i = \{1, \ldots, r \}$ . For all $i,j  < r$, $i\leq j$, the walks $\a_{i} $ and $\a_{j} $ are $G$-conistent and $|\Succ(\a_{j} )| \leq |\Succ(\a_i )|$. Since there is no divisor $k$ of $3$ or $\l$ of $2$ such that $1 < k < 3$ or $1 < \l <2$, each $\a_i$ is $G$-consistent and satisfies Condition \ref{aa} in Corollary \ref{glavni_izrek}. By the definition of $\a$ we know that $|\Succ(\b)| = 1$ for any $G$-consistent walk $\b = (v_0, \ldots, v_r, x)$ and $x \in V(\Gamma)$. Hence  the stabilizer $G_\a$ fixes all elements of the set $\Succ(\a)$ by Lemma \ref{alt}. It follows that $G_\a = 1$  By corollary \ref{glavni_izrek}.
\end{proof}

\begin{remark}
    Theorems \ref{I2}, \ref{4val} and the Orbit-Stabilizer Lemma imply that for a connected $4$-valent graph $\Gamma$, an arc-transitive $G \leq \Aut(\Gamma)$ and $v \in V(\Gamma)$ with $|G_v| = 3^s 2^t$, there exist $4$ orbits consistent cycles. One of the orbits contains all trivial cycles, which all have $3^s 2^{t-2}$ shunts. For two of the remaining orbits, every cycle has a unique shunt, while any cycle in the last orbit has $2^{t-2}$ distinct shunts.
\end{remark}

\section{Examples of graphs for which there exists no $G$-consistent walk with a trivial stabilizer}\label{s6}

For a non-prime integer $d \geq 6$ there always exist connected arc-transitive $d$-valent graphs in which no consistent cycle has a trivial stabilizer. If $d = a \cdot b$ with $a,b \neq 1$ and $a \geq 3$, we can start with any connected arc-transitive $a$-valent graph $\Gamma$ that has no twin vertices and is not isomorphic to $\text{Cay}(\Z_n, S)$ with $\pm 1 \in S$, for example the hypercube $Q_a$ (see \cite{Kovacs+2004} for the classification of arc-transitive and  \cite{Alspach+1996} for the classification of $2$-arc transitive circulants). As we prove below, the graph $\Gamma[\overline{K_b}]$, where $\overline{K_b}$ denotes the empty graph on $b$ vertices, is then an arc-transitive $d$-valent graph in which no cycle has a trivial stabilizer.

As noted in \cite{Miklavic+2007}, a $G$-consistent walk $\a = (v_0, \ldots, v_n) $ and a shunt $g \in \Sh(\a)$ induce a unique $G$-consistent cycle $\b$. If no vertices in $\a$ repeat, then $\b = (v_0, \ldots, v_n, v_n^g, v_n^{g^2}, \ldots v_n^{g^i})$ where $i$ is the first integer for which $v_n^{g^i} = v$ for some vertex $v$ in $\a$, clearly $v$ must then be $v_0$.
If $v_k$ is the first vertex in $\a$ that is the same as one of the previous vertices, then $v_k = v_0$ and $v_x = v_{x \mod k}$ for all $x \in \{0, \ldots, n\}$. In this case the induced cycle $\b$ is $(v_0, \ldots, v_k)$.

\begin{lemma}
    Let $\Gamma$ be a connected vertex-transitive graph that has no twin vertices and is not isomorphic to $\text{Cay}(\Z_n, S)$ with $\pm 1 \in S$ for any $n \in \N$. Let $m \geq 2$. Then $\tilde \Gamma = \Gamma[\overline{K_m}]$ contains no consistent walk with a trivial stabilizer. 
\end{lemma}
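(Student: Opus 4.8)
The plan is to understand the structure of $\tilde\Gamma = \Gamma[\overline{K_m}]$ and its automorphism group, and then show that any consistent walk must have a nontrivial stabilizer. First I would set up notation: the vertex set of $\tilde\Gamma$ is $V(\Gamma)\times\{1,\dots,m\}$, with $(u,i)$ adjacent to $(w,j)$ if and only if $u$ is adjacent to $w$ in $\Gamma$. The key preliminary observation is that in $\tilde\Gamma$ the ``fibres'' $F_u = \{u\}\times\{1,\dots,m\}$ are precisely the classes of twin vertices: $(u,i)$ and $(u,j)$ have the same neighbourhood. Since $m\geq 2$, every vertex has a twin, so the pointwise stabilizer of a fibre $F_u$ other than $(u,i)$'s own fibre can still move $(u,i)$; more importantly, the kernel $K$ of the action of $\Aut(\tilde\Gamma)$ on the quotient (collapsing each fibre to a point) is nontrivial — it contains $\mathrm{Sym}(m)^{V(\Gamma)}$ acting fibrewise. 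The hypotheses on $\Gamma$ (no twin vertices, not a circulant with $\pm1$ in the connection set) are exactly what is needed to ensure $\Aut(\tilde\Gamma) = \Aut(\Gamma)\wr\mathrm{Sym}(m)$ via this wreath structure, so that the quotient of $\tilde\Gamma$ by the fibre partition recovers $\Gamma$; this is a standard Sabidussi/generalized-lexicographic-product fact, and I would cite or quote it rather than reprove it.

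Next I would consider an arbitrary consistent walk $\tilde\a = ((u_0,i_0),(u_1,i_1),\dots,(u_n,i_n))$ in $\tilde\Gamma$ with a shunt $\tilde g$. Projecting to $\Gamma$ gives a walk $\a = (u_0,\dots,u_n)$ which is consistent in $\Gamma$ with shunt $g$, the image of $\tilde g$ in $\Aut(\Gamma)$. The induced consistent cycle $\b$ of $\tilde\a$ (as described in the paragraph before the lemma) projects to the induced consistent cycle of $\a$ in $\Gamma$. The point is to locate an element of $K$ — a fibrewise permutation — that stabilizes $\tilde\a$ pointwise but is nontrivial. A fibrewise permutation $\kappa = (\kappa_u)_{u\in V(\Gamma)}$ fixes $\tilde\a$ pointwise if and only if $\kappa_{u_t}$ fixes $i_t$ for every $t$; so I need a vertex $w\in V(\Gamma)$ that either does not appear among $u_0,\dots,u_n$ at all, or appears only with index values that do not exhaust $\{1,\dots,m\}$ — then I can choose $\kappa_w$ to be a nontrivial permutation fixing those indices (possible since $m\geq 2$ and at most $m-1$ indices are forbidden, or if $w$ is absent then $\kappa_w$ is completely free), and set $\kappa_{u}=\mathrm{id}$ elsewhere. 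Such $\kappa\in K\leq\Aut(\tilde\Gamma)$ is a nontrivial element of $G_{\tilde\a}$ (here $G=\Aut(\tilde\Gamma)$), so the stabilizer is nontrivial.

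The main obstacle — and the only place the hypotheses on $\Gamma$ really bite — is the case where the walk's projection $\a$ ``sees'' every vertex of $\Gamma$ with a full set of indices, i.e. when no usable $w$ exists. I would argue this cannot happen for a \emph{consistent} walk. The induced consistent cycle $\b$ of $\a$ in $\Gamma$ is a closed consistent walk, and it visits only the vertices $u_0,\dots,u_{k}$ where $k$ is the period; a consistent cycle in a vertex-transitive graph need not be spanning, but even a spanning consistent cycle $(u_0,\dots,u_{n-1},u_0)$ would, by consistency with shunt $g$, force $g$ to cyclically permute the vertices $u_0\mapsto u_1\mapsto\cdots\mapsto u_{n-1}\mapsto u_0$ with $u_{t+1}$ adjacent to $u_t$, making $\Gamma$ a circulant $\mathrm{Cay}(\Z_n,S)$ with $\pm1\in S$ — which is excluded. (One must also dispose of the degenerate possibility $\Gamma$ has a single vertex, ruled out since $\Gamma$ has no twin vertices only vacuously there, but the blanket assumption that graphs have at least three vertices handles this.) Similarly, if the walk repeats a vertex before becoming closed, the earlier-than-the-cycle portion still omits enough index-room. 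Thus in all cases a suitable $w$ exists, the fibrewise $\kappa\neq 1$ stabilizes $\tilde\a$, and $G_{\tilde\a}\neq 1$. I expect the bookkeeping in this last case — precisely tracking which vertices and indices the walk uses, and invoking the circulant exclusion at the right moment — to be the delicate part; everything else is the standard wreath-product description of $\Aut$ of a lexicographic product over $\overline{K_m}$.
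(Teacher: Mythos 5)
Your proposal is correct, but it takes a genuinely different route from the paper. The paper first reduces to consistent cycles and then runs a global counting argument: invoking Theorem~\ref{I2} (exactly $d$ orbits of consistent cycles in a $d$-valent vertex-transitive graph), it builds $dm$ explicit consistent cycles in $\tilde\Gamma$ (each orbit representative of $\Gamma$ lifted in $m$ ways of pairwise distinct lengths, winding through the fibres), shows via the fibre block system that they lie in $dm$ distinct orbits, concludes that they represent \emph{all} orbits of consistent cycles of the $dm$-valent graph $\tilde\Gamma$, and exhibits a fibre transposition stabilizing each one. Your argument is local and more elementary: project an arbitrary consistent walk to $\Gamma$ along the fibre partition (legitimate because the fibres are exactly the twin classes of $\tilde\Gamma$ --- this, not the full wreath-product identification, is where ``no twin vertices'' is used, and the circulant exclusion plays no role there); note that a consistent walk with shunt $g$ satisfies $u_t=u_0^{g^t}$, so its vertex set lies in a single $\langle g\rangle$-orbit, which cannot be all of $V(\Gamma)$ since a transitive cyclic group is regular and would exhibit $\Gamma$ as $\text{Cay}(\Z_n,S)$ with $\pm1\in S$; then a transposition in an unvisited fibre stabilizes the walk. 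This bypasses Theorem~\ref{I2} entirely, at the cost of the explicit orbit description that the paper's construction yields as a by-product. Two small repairs for the write-up: your parenthetical fallback --- that a vertex $w$ appearing only with indices from a proper subset $I\subsetneq\{1,\dots,m\}$ admits a nontrivial $\kappa_w$ fixing $I$ pointwise --- fails when $|I|=m-1$, since a permutation of $\{1,\dots,m\}$ fixing $m-1$ points is the identity; but this branch is never needed, because the spanning-orbit argument always produces a vertex entirely absent from the projection, so the anticipated ``delicate bookkeeping'' disappears once you use $u_t=u_0^{g^t}$ directly.
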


\begin{proof}
   Every consistent walk induces a consistent cycle, whose stabilizer is a subgroup of the stabilizer of the consistent walk. Hence it is enough to show that $\tilde \Gamma$ contains no consistent cycle with a trivial stabilizer.
    If a consistent cycle in $\Gamma$ contains all the vertices of $\Gamma$, then its shunt generates a cyclic vertex-regular subgroup of $\Aut(\Gamma)$, which contradicts the assumption that $\Gamma$ is not isomorphic to $\text{Cay}(\Z_n, S)$ with $\pm 1 \in S$ for some $n \in \N$. 
    Hence there exists no consistent cycle in $\Gamma$ that constains all the vertices of $\Gamma$.

     Suppose that $\Gamma$ is a $d$-valent graph. By Theorem \ref{I2}, there exist $d$ orbits of consistent cycles in $\Gamma$. We denote their representatives by $\a_1, \ldots, \a_d$ (note that one of them is the trivial cycle that only contains two distinct vertices). For every $i \in \{1, \ldots, d\}$ we can build $m$ consistent cycles $\b_{i,1}, \b_{i,2} \ldots \b_{i,m}$ in $\tilde \Gamma$. Let $\a_i = (v_0, v_1, \ldots, v_{k-1}, v_k, v_0)$, let $g_i$ be a shunt for $\a_i$ and let $V(\overline{K_m}) = \{0, \ldots, m-1\}$. Then, define 
     \begin{align*}
         \b_{i,1} &= ((v_0, 0), (v_1,0), \ldots, (v_{k-1},0) (v_k,0), (v_0,0)), \\
         \b_{i,2} &= ((v_0, 0), (v_1,0), \ldots, (v_{k-1},0) (v_k,0), (v_0,1), (v_1,1), \ldots, (v_{k-1},1) (v_k,0)), \\
         &\vdots \\
         \b_{i,m} &= ((v_0, 0), (v_1,0), \ldots, (v_{k-1},0) (v_k,0), (v_0,1), (v_1,1), \ldots, (v_{k-1},1) (v_k,2), \ldots \\ & \ldots, (v_0,m-1), (v_1,m-1), \ldots, (v_{k-1},m-1) (v_k,0)).
     \end{align*}
    Note that all the sequences $\b_{i,1}, \b_{i,2}, \ldots, \b_{i,m}$ are cycles in $\tilde{\Gamma}$. They are consistent, since $S_m \wr \Aut(\Gamma)$ (where $S_m$ acts naturally on $\{0,1, \ldots, n-1 \}$) is a subgroup 
    of the automorphism group of $\tilde \Gamma$, and $(((0 \ 1 \ \ldots \ j), \text{id}, \ldots, \text{id}), g_i) \in S_m \wr \Aut(\Gamma)$ is a shunt for $\b_{i,j}$.
    
    We claim that the cycles $\b_{i,j}$ are representatives of different $\Aut(\tilde \Gamma)$-orbits of consistent cycles in $\tilde \Gamma$. Observe that for all $i, j_1, j_2$, the cycles $\b_{i,j_1}$ and $\b_{i, j_2}$ have different length, hence they are included in different $\Aut(\tilde \Gamma)$-orbit of consistent cycles. 

    We are left with the case $i_1 \neq i_2$.
    Suppose there exist $j_1, j_2 \in \{1, \ldots, m\}$ and $g \in \Aut(\tilde{\Gamma})$ such that $\b_{i_1,j_1}^g = \b_{i_2,j_2}$. Since $\Gamma$ has no twin vertices, the sets $\{v_i\} \times \{0, \ldots, m-1\}$ for $v_i \in V(\Gamma)$ are a block system for the action of $\Aut(\tilde \Gamma)$. Thus $g$ determines an automorphism $g' \in \Aut(\Gamma)$, where $g'(v_i)$  is the first component of $g(v_i, 0)$. But $\a_{i_1}^{g'} = \a_{i_2}$, which is a contradiction with the fact that  $\a_{i_1}$ and $\a_{i_2}$ are representatives of different $\Aut(\Gamma)$-orbits of consistent cycles in $\Gamma$. This concludes the proof of the claim.

    For every $\b_{i,j}$, let $v_i \in V(\Gamma)$ be a vertex that is not included in $\a_i$.
    Then every permutation $k$ that exchanges $(v_i, 0)$ and $(v_i, 1)$
    while fixing all other vertices of $\tilde{\Gamma}$ is an automorphism of $\tilde \Gamma$ that fixes $\b_{i,j}$. Hence $\b_{i,j}$ does not have a trivial stabilizer.
    
    Since $\tilde\Gamma$ is a vertex-transitive $dm$-valent graph, there exists $dm$ $\Aut(\tilde \Gamma)$-orbits of consistent cycles by Theorem \ref{I2}. There are $dm$ many cycles $\b_{i,j}$, it follows that must include one representative from each $\Aut(\tilde \Gamma)$-orbit of consistent cycles. Therefore no consistent cycle in $\tilde \Gamma$, and hence no consistent walk in $\tilde \Gamma$, has a trivial stabilizer.
\end{proof}

\section{Consistent cycles and group generators}\label{s7}

Let $\Gamma$ be a connected $d$-valent graph and let $G$ be a vertex-transitive subgroup of $\Aut(\Gamma)$.
Recall, that by a result of Conway~\cite{Conw_talk,Miklavic+2007} there exist $d$ $G$-orbits of $G$-consistent cycles in $\Gamma$. Moreover, as proved in \cite[Lemma~3.5]{Miklavic+2007_2}, one can choose one representative from each of the orbits so that the representatives of any two orbits share as many initial vertices as is possible for any two cycles from those orbits. Let us make this more precise.

\begin{definition}[{\cite{Miklavic+2007_2}}]
Let $\a=(a_0, \ldots, a_k)$ and $\b=(b_0, \ldots, b_\l)$ be two walks. Then their overlap $m(\a,\b) = -1$ if $a_0 \neq b_0$, and, if $a_0 = b_0$, 
\[m(\a, \b) = \text{max}\{t \mid a_i = b_i \hbox{ for } i = 0, 1, 2,...,t\}.\] Similarly, if $A=\a^G$ and $B=\b^G$ are two distinct $G$-orbits of $G$-consistent walks, then
\[m(A,B) = \max\{m(\a,\b) : \a \in A, \b \in B\} .\]
\end{definition}

\begin{lemma}[{\cite[Lemma 3.5]{Miklavic+2007_2}}]
    If $G \leq \Aut(\Gamma)$, then there exists a complete set of representatives $\b_1, \ldots, \b_d$ of the $G$-orbits of $G$-consistent cycles such that $m(\b_i,\b_j) = m(\b_i^G,\b_j^G)$ for every pair $i,j$, $i\not = j$.
\end{lemma}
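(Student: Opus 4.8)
The statement asserts the existence of a complete set of orbit representatives $\b_1,\dots,\b_d$ of $G$-consistent cycles whose pairwise overlaps are simultaneously maximal, i.e.\ $m(\b_i,\b_j)=m(\b_i^G,\b_j^G)$ for all $i\neq j$. The plan is to build the representatives one orbit at a time, using the $G$-action to realign an already-chosen representative against a new one without destroying the alignments established earlier. The key structural fact I would isolate first is the following: if $\a$ and $\b$ are $G$-consistent cycles with $a_0=b_0=:w$ and $m(\a,\b)=t\geq 0$, so that $\a=(w,\dots,a_t,a_{t+1},\dots)$ and $\b=(w,\dots,a_t,b_{t+1},\dots)$ agree on the initial $(t+1)$-arc $\hat\gamma:=(a_0,\dots,a_t)$, then \emph{any} $x\in G_{\hat\gamma}$ sends $\a$ to another consistent cycle $\a^x$ in the same orbit with $m(\a^x,\b)\geq t$; more importantly, the set of consistent cycles in $\a^G$ that overlap a fixed $\b$ in at least the first $t$ coordinates is a single $G_{\hat\gamma}$-orbit together with the dictated prefix, and the maximal overlap $m(\a^G,\b^G)$ is attained by a pair sharing a common initial arc of that length. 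This is exactly the kind of book-keeping the earlier lemmas of Section~\ref{s3} (Lemma~\ref{X}, Lemma~\ref{odsek}) are designed for, since successors and shunts track precisely how the tail of a consistent walk can be moved while the head is frozen.

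With that in hand, I would proceed by induction on the number of representatives fixed so far. Pick any representative $\b_1$ of the first orbit. Suppose $\b_1,\dots,\b_{r}$ have been chosen so that $m(\b_i,\b_j)=m(\b_i^G,\b_j^G)$ for all $i\neq j\le r$. To add $\b_{r+1}$: for each $i\le r$, the maximal overlap $m(\b_i^G,\b_{r+1}^G)=:t_i$ is realized by some pair $(\a_i,\c_i)$ with $\a_i\in\b_i^G$, $\c_i\in\b_{r+1}^G$, sharing the prefix of length $t_i+1$. The difficulty is that the $\a_i$ need not equal the fixed $\b_i$, and the various $\c_i$ (one per $i$) need not be a single cycle. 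The idea is that we only have freedom to choose \emph{one} cycle in the orbit $\b_{r+1}^G$, so we must show the constraints coming from the different $i$ are compatible. This is where one uses that the representatives $\b_1,\dots,\b_r$ were already chosen with maximal mutual overlaps: two orbits with small mutual overlap $m(\b_i^G,\b_j^G)$ impose constraints on $\b_{r+1}$ only on disjoint-enough prefixes, and the vertex-transitivity plus the prefix-arc stabilizer acting transitively on the relevant extensions lets one adjust $\b_{r+1}$ by an element of $G$ fixing the common prefix of $\b_1,\dots,\b_r$ (or the appropriate initial vertex) to realize all the $t_i$ at once.

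Concretely, the step I would carry out with most care is the simultaneous realization: order the indices $i\le r$ by decreasing $t_i$, realize the largest overlap $t_{i_1}$ by replacing $\b_{r+1}$ with a suitable translate (legitimate, as orbit representatives are only determined up to $G$), then observe that realizing the next overlap $t_{i_2}\le t_{i_1}$ requires only moving $\b_{r+1}$ by an element of the pointwise stabilizer of its first $t_{i_2}+1$ vertices intersected with the subgroup that fixes the prefix of $\b_{i_1}$ — and such an element exists precisely because $m(\b_{i_1},\b_{i_2})$ already equals $m(\b_{i_1}^G,\b_{i_2}^G)$, so there is no hidden obstruction forcing $\b_{i_1}$ and $\b_{i_2}$ to diverge earlier than the configuration with $\b_{r+1}$ demands. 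Iterating through all $i\le r$ and checking that each adjustment preserves the previously secured overlaps yields $\b_{r+1}$, completing the induction. The main obstacle, as indicated, is proving this compatibility of the constraints; I expect it to reduce to a statement that the relevant prefix-stabilizers act transitively on the continuations realizing a given overlap, which in turn follows from $G$-consistency of the cycles in question via Lemma~\ref{X}.
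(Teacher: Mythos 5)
This lemma is quoted in the paper from \cite[Lemma 3.5]{Miklavic+2007_2} without proof, so the comparison below is with a complete argument rather than with a proof in the paper itself.

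Your skeleton (induct on the number of chosen representatives, realize the largest overlap first, then argue that the remaining constraints on the single new representative $\b_{r+1}$ are compatible) has the right shape, but the crux --- simultaneous compatibility of the constraints coming from $\b_1,\dots,\b_r$ --- is exactly what you leave unproven. Your proposed justification (``such an element exists precisely because $m(\b_{i_1},\b_{i_2})$ already equals $m(\b_{i_1}^G,\b_{i_2}^G)$'', to be reduced to transitivity of prefix-stabilizers on continuations) is not an argument: transitivity of $G_{\hat\gamma}$ on the cycles of a fixed orbit beginning with $\hat\gamma$ is true but trivial (any $g\in G$ mapping one such cycle to another automatically fixes $\hat\gamma$), and it does not produce an element of the intersection of the two relevant prefix-stabilizers that also achieves the required overlap against $\b_{i_2}$. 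As written, the central step is a genuine gap. Moreover, $G$-consistency plays no role in this lemma --- the statement holds for any $G$-invariant family of walks --- so Lemmas \ref{X} and \ref{odsek} are not the relevant tools.

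The missing idea is an ultrametric property of the orbit-overlap function, after which no iterative adjustment is needed at all. First, since $m$ is $G$-invariant and $G$ is transitive on each orbit, for every \emph{fixed} $\b\in B$ there is some $\a\in A$ with $m(\a,\b)=m(A,B)$: translate a maximizing pair so that its $B$-component becomes $\b$. Second, it follows that $m(A,C)\ge\min\{m(A,B),m(B,C)\}$ for any three orbits (fix $\b\in B$, realize both maxima against this same $\b$, and note the two resulting cycles share a prefix of length $\min+1$); equivalently, among the three pairwise overlaps of any three orbits, the two smallest coincide. Now in the inductive step put $t_i=m(\b_{r+1}^G,\b_i^G)$, let $t_{i_1}=\max_i t_i$, and choose $\b_{r+1}$ with $m(\b_{r+1},\b_{i_1})=t_{i_1}$ by the first observation. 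For any other $i\le r$, write $s=m(\b_{i_1},\b_i)=m(\b_{i_1}^G,\b_i^G)$ (available by the inductive hypothesis); then $m(\b_{r+1},\b_i)\ge\min\{t_{i_1},s\}$ because both cycles agree with $\b_{i_1}$ on the corresponding prefixes, while the ``two smallest coincide'' property applied to $\{t_{i_1},t_i,s\}$ together with $t_i\le t_{i_1}$ forces $\min\{t_{i_1},s\}=t_i$. Since $m(\b_{r+1},\b_i)\le t_i$ by definition of the orbit overlap, equality holds for every $i$, and the induction closes with a single choice rather than a cascade of adjustments.
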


Let $\mathcal{C} = \{\b_1, \ldots, \b_d\}$ be such a set of representatives of $G$-orbits of $G$-conistent cycles and let $g_1, \ldots, g_d \in G$ be one shunt for each of $\b_1, \ldots, \b_d$ respectively. 
Let $\SH = \langle g_1, \ldots, g_d \rangle$. We will prove that if there exists a $G$-consistent cycle $\a$ such that $G_\a = 1$, then $\SH = G$. In particular $\d(G) \leq \text{val}(G)$, where $\d(G)$ denotes the size of the smallest generating set of $G$.

\begin{theorem}
    Let $\Gamma$ be a connected graph, let $G$ be a vertex-transitive subgroup of $\Aut(\Gamma)$ and let $\tau$ be a $G$-consistent walk. Suppose that $H \leq G$ is such that $G_\tau \leq H$ and that $\SH \subseteq H$. Then $H = G$.
\end{theorem}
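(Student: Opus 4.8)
Since $H\supseteq\SH$ and $H\supseteq G_\tau$, it suffices to prove that $\langle \SH, G_\tau\rangle=G$, and for this I would show that $H$ acts transitively on $V(\Gamma)$ and that $H$ contains a vertex stabilizer $G_v$; together these give $|H|=|V(\Gamma)|\,|G_v|=|G|$ by the orbit-stabilizer lemma, hence $H=G$. Before doing so I would make two reductions. First, one may assume that $\tau$ is a $G$-consistent cycle: if $g\in\Sh(\tau)$ and $v$ is the initial vertex of $\tau$, then the $\langle g\rangle$-orbit of $v$ is the vertex set of a $G$-consistent cycle $\b$ having $g$ as a shunt, and since every vertex of $\tau$ has the form $v^{g^i}$ we get $V(\tau)\subseteq V(\b)$, so $G_\b=G_{V(\b)}\le G_{V(\tau)}=G_\tau\le H$; replacing $\tau$ by $\b$ therefore only strengthens the hypotheses. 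Second, any two distinct $G$-orbits $A,B$ of $G$-consistent cycles satisfy $m(A,B)\ge 0$, since by vertex-transitivity each orbit has a representative rooted at any prescribed vertex; consequently the chosen representatives $\b_1,\dots,\b_d$ all share one initial vertex $v_0$, and as $g_i$ cyclically permutes $V(\b_i)\ni v_0$, the group $\SH=\langle g_1,\dots,g_d\rangle$ is already transitive on $W:=\bigcup_{i=1}^d V(\b_i)$.

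To promote this to transitivity of $H$ on all of $V(\Gamma)$ I would show $\Gamma(v_0)\subseteq v_0^H$: this is enough, because then $v_0^H$ is closed under taking neighbours (if $v=v_0^h$ and $v'\sim v$ then $v'^{h^{-1}}\in\Gamma(v_0)\subseteq v_0^H$, so $v'\in v_0^H$), and connectivity of $\Gamma$ forces $v_0^H=V(\Gamma)$. Given a neighbour $u$ of $v_0$, choose $g\in G$ with $v_0^g=u$; the $G$-consistent cycle $\c_u$ induced by $g$ is rooted at $v_0$, has $u$ as its second vertex, and lies in some orbit $\b_k^G$. Comparing $\c_u$ with the representative $\b_k$ (which is also rooted at $v_0$) through the maximality of the overlaps among $\b_1,\dots,\b_d$, and invoking $G_\tau\le H$ to supply the vertex-stabilizer ingredient, one should be able to exhibit an element of $H$ taking $v_0$ to $u$. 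Making the maximal-overlap property of $\mathcal C$, together with the hypothesis $G_\tau\le H$, produce enough elements of $H$ to reach every neighbour of $v_0$ is the step I expect to be the main obstacle.

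Finally, assuming $H$ is transitive on $V(\Gamma)$, I would recover a full vertex stabilizer. Since $\tau$ is now a cycle, $G_\tau=G_{\hat\tau}=G_{V(\tau)}$, and I would climb, one fixed vertex at a time, from $G_{V(\tau)}$ up to $G_v$ for a vertex $v$ of $\tau$ (say along $G_{V(\tau)}\le\dots\le G_{(v,w)}\le G_v$): at the passage from $G_\rho$ to $G_{\hat\rho}$ (deleting the last vertex of $\rho$) one has $|G_{\hat\rho}:G_\rho|=|\Succ(\rho)|$ by Lemma~\ref{os}, the cosets being parametrized by the orbit of $\rho$ under $G_{\hat\rho}$, and transitivity of $H$ on $V(\Gamma)$ together with the shunts $g_1,\dots,g_d\in H$ and Lemma~\ref{X} should allow one to realize a transversal inside $H$. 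This gives $G_v\le H$, and with $H$ transitive we conclude $H=G$ as above. Both of the last two paragraphs are delicate; I suspect the role of the maximal-overlap hypothesis on $\mathcal C$ is precisely to make this climbing run without the arithmetic side conditions that appear in Lemma~\ref{dva} and Corollary~\ref{glavni_izrek}.
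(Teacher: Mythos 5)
Your outline assembles the right ingredients (the induced consistent cycle, the maximal-overlap choice of $\mathcal C$, Lemma~\ref{odsek}, and a final Frattini argument), and the reduction to the case where $\tau$ is a cycle is fine. But the two steps that carry all the content of the theorem --- showing that $H$ is transitive on $V(\Gamma)$, and showing that $H$ contains a full vertex stabilizer $G_{v_0}$ --- are both left as ``one should be able to'' / ``should allow one to realize a transversal'', and you yourself flag them as the main obstacles. Neither is routine, so as written this is a genuine gap rather than a proof.

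Worse, the order you propose is circular. To send $v_0$ to a neighbour $u$ inside $H$, you take the consistent cycle $\gamma_u$ through $(v_0,u)$, write $\gamma_u=\b_k^h$, and use $s_u=g_k^h$; since $\b_k$ and $\gamma_u$ are both rooted at $v_0$, the conjugating element $h$ lies in $G_{v_0}$, and you need $h\in H$ --- i.e.\ you need $G_{v_0}\le H$ \emph{before} you can prove transitivity. (Your fallback, that $\SH$ is already transitive on $W=\bigcup_i V(\b_i)$, does not rescue this: in a $2$-arc-transitive graph the maximal-overlap representatives $\b_1,\dots,\b_d$ all begin with the same arc, so $W$ need not contain all of $\Gamma(v_0)$.) Meanwhile your plan for obtaining $G_{v_0}\le H$ assumes transitivity of $H$. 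The way out, which is what the paper does, is a single \emph{descending} induction along the cycle $\a=(v_0,\dots,v_n)$ induced by $\tau$: one proves simultaneously, for $i=n,n-1,\dots,1$, that $G_{\a_{i-1}}\le H$ and that $\Sh(\gamma)\subseteq H$ for every consistent cycle $\gamma$ with $m(\gamma,\a)\ge i$. The base case is $G_{\a_{n-1}}=G_\a\le G_\tau\le H$; the inductive step conjugates the representative $\b\in\mathcal C$ of $\gamma$'s orbit into $\gamma$ by an element of $G_{\a_i}\le H$ (this is exactly where maximal overlap is used, to guarantee $m(\b,\a)\ge i$ so the conjugator fixes $\a_i$), and then recovers $G_{\a_{i-1}}=\Sh(\a_i)g^{-1}$ from the union of the shunt sets $\Sh(\gamma)$ over all such $\gamma$. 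Only after this induction yields $G_{v_0}\le H$ does one prove transitivity and invoke Frattini. You would need to restructure your argument along these lines and actually carry out the induction for the proof to go through.
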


\begin{proof}
    Let $g \in \Sh(\tau)$ and 
    let $\a = (v_0, \ldots, v_n)$ be the $G$-consistent cycle induced by $g$ and $\tau$. We can assume that $\a = \b_1$ and $g = g_1$. For $i \in \{1, \ldots, n\}$ let $\a_i = (v_0, \ldots, v_i)$. 
   Since $G$ acts on the set of all $G$-consistent walks of length $i-1$, note that for every $i \in \{2, \ldots, n\}$ (see Lemma \ref{odsek}):
    \begin{align}\label{l_odsek}
        G_{\a_{i-1}} = \Sh(\a_i)g^{-1}. 
    \end{align}
We will inductively prove that for every $i \in \{1, \ldots, n\}$:
    \begin{enumerate}[{\rm (i)}]
        \item\label{i} $G_{\a_{i-1}} \leq H$,
        \item\label{ii} $\Sh(\gamma) \subseteq H$ for every $G$-consistent cycle $\gamma$ for which $m(\gamma, \a) \geq i$.
    \end{enumerate}
First let $i = n$. Then the walks $\a = \a_n$ and $\a_{n-1}$ contain the same vertices, hence $G_{\a_{n-1}} = G_{\a_{n}} \leq G_\tau \leq H$. The only $G$-consistent cycle that has overlap at least $n$ with $\a$ is $\gamma = \a$. Since  $\Sh(\a) = G_{\a_{n-1}} g $ by (\ref{l_odsek}), and since $g \in H$, it follows that $\Sh(\gamma) \subseteq H$.

Suppose now that the claim is true for $i+1$, we will prove that it is then true for $i$. 
Let $\gamma$ be a $G$-consistent cycle such that $m(\gamma, \a) \geq i$. There exist
$\b \in \mathcal{C}$ and $h \in G$ such that $\b^h = \c$. Let $g_\b \in \SH \subseteq H$ be a shunt for $\b$. 
Then $s = (g_\b)^h \in \Sh(\c)$. Since $\b$ has the largest possible overlap with $\a$ of any 
cycle in its orbit, it follows that $m(\b, \a) \geq m(\gamma, \a) \geq i$, hence $h \in G_{\a_i} \leq H$. Therefore $s \in H$. 
Let $\gamma_{i+1} = (c_0, \ldots, c_i, c_{i+1})$ be the subwalk of $\gamma$ that contains the first $i+2$ vertices of $\gamma$. 
Then $\Sh(\gamma) \subseteq \Sh(\gamma_{i+1})$. Since the overlap of $\gamma$ and $\a$ is at least $i$, it follows that $v_k = c_k$ for all $i \in \{0, \ldots, i\}$. By Lemma \ref{odsek}, $\Sh(\gamma_{i+1}) = G_{\a_i}s$, hence $\Sh(\gamma) \subseteq H$, which proves part \ref{ii} of the claim.

To prove part \ref{i} of the claim note that 
$$\Sh(\a_i) = \bigcup_{\substack {\text{$\delta$; $\delta$  is a $G$-consistent cycle} \\ \text{and } \ m(\delta,\a) \geq i}} \Sh(\delta),$$
since for every element of $t \in \Sh(\a_i)$ there exists a $G$-consistent cycle with a shunt $t$ that begins with $\a_i$.
Hence by part \ref{ii} it follows that $\Sh(\a_i) \leq H$. 
Since $G_{\a_{i-1}} = \Sh(\a_i)g^{-1}$ by (\ref{l_odsek}), it follows that $G_{\a_{i-1}} \leq H$, which proves the claim.

We proved that $G_{v_0} \leq H$. Let $u \in \Gamma(v_0)$, we will show that there exists $s_u \in H$ such that $v_0^{s_u} = u$. Since $G$ is vertex-transitive, there exists an automorphism that maps $v_0$ to $u$. This induces a $G$-consistent cycle $\delta$ that begins with the vertices $v_0, u$.
There exist $\b \in \mathcal{C}$ and $h_u \in G_v$ such that $\b^{h_u} = \delta$. Let $g_\b \in \SH$ be a shunt for $\b$. Then $s_u = (g_\b)^{h_u}$ is a shunt for $\delta$. Since $h_u \in G_{v_0} \leq H$ and $g_\b \in \SH \subseteq H$, it follows that $s_u \in H$. Hence there exists an element of $H$ that maps $v_0$ to $u$.
By a connectedness argument it follows that $H$ is vertex-transitive. On the other hand, $G_{v_0} \leq H$, hence $G = H$ by the Frattini argument.
\end{proof}

\begin{corollary}\label{CC}
    Let $G$ be a vertex-transitive group of automorphisms of a connected $d$-valent graph $\Gamma$, and let $\a$ be a $G$-consistent walk. Then
    \[ \d(G) \leq d + \d(G_{\a}) .\] 
    In particular, if $\Gamma$ contains a $G$-consistent cycle $\a$ such that $G_{\a} = 1$, then $\d(G) \leq d$.
\end{corollary}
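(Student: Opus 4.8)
The plan is to read off the corollary directly from the preceding theorem. Recall the fixed data introduced above: a complete set of representatives $\b_1, \ldots, \b_d$ of the $G$-orbits of $G$-consistent cycles (chosen so that $m(\b_i,\b_j)=m(\b_i^G,\b_j^G)$), shunts $g_1, \ldots, g_d \in G$ for them, and $\SH = \langle g_1, \ldots, g_d\rangle$. By Theorem~\ref{I2} there are exactly $d$ such orbits, so $\SH$ is generated by $d$ elements. These $d$ elements are what we will combine with a generating set of the walk-stabilizer.

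Concretely, let $\a$ be the given $G$-consistent walk and pick a generating set $\{h_1, \ldots, h_t\}$ of $G_{\a}$ with $t = \d(G_{\a})$. Set $H = \langle g_1, \ldots, g_d, h_1, \ldots, h_t\rangle \leq G$. By construction $G_{\a} \leq H$ and $\SH \subseteq H$, so the hypotheses of the preceding theorem are met with $\tau = \a$, and hence $H = G$. Since $H$ is generated by $d + t$ elements, this gives $\d(G) \leq d + \d(G_{\a})$, which is the first assertion.

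For the ``in particular'' statement, suppose $\Gamma$ contains a $G$-consistent cycle $\a$ with $G_{\a} = 1$. A cycle is in particular a walk, so the first part applies, and $\d(G_{\a}) = \d(1) = 0$; therefore $\d(G) \leq d$.

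There is essentially no obstacle remaining at the level of the corollary: all the real work sits in the preceding theorem, whose content is precisely that adjoining the $d$ fixed shunts $g_1, \ldots, g_d$ to any subgroup containing the stabilizer of a $G$-consistent walk already forces that subgroup to be all of $G$. The only thing to verify here — and it is immediate from how $H$ is defined — is that $H$ simultaneously contains $\SH$ and $G_{\a}$.
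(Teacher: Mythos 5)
Your proof is correct and is exactly the intended deduction: the corollary is an immediate consequence of the preceding theorem, obtained by taking $H$ to be generated by the $d$ fixed shunts together with a minimal generating set of $G_{\a}$. No issues.
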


To conclude, Corollary~\ref{CC} can be refined for specific local actions to give a partial solution to \cite[Problem~4]{Barbieri+2024}. Indeed, in Theorem~\ref{weaklypsub}, we have proved that the stabilizer of a consistent cycle is trivial whenever the local group is weakly $p$-subregular.

\begin{corollary}\label{CCC}
    Let $G$ be a vertex transitive group of automorphisms of a connected $d$-valent graph $\Gamma$. Suppose that the local group is weakly $p$-subregular for some primes $p$. Then $\d(G) \leq d$.
\end{corollary}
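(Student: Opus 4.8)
The plan is to deduce Corollary~\ref{CCC} by combining Theorem~\ref{weaklypsub} with Corollary~\ref{CC}. First I would note that the hypotheses of Corollary~\ref{CCC} are slightly weaker than those of Theorem~\ref{weaklypsub}: in the corollary $G$ is only assumed vertex-transitive, whereas Theorem~\ref{weaklypsub} requires arc-transitivity, and also requires valence at least $3$. So the first step is to handle these gaps. For the valence: since the local group $G_v^{\Gamma(v)}$ is assumed to be weakly $p$-subregular, Definition~\ref{def_wps} forces the existence of two points $x,y$ in $\Gamma(v)$ with $x^{\overline L}\cup y^{\overline L}=\Gamma(v)$ and $|L_x|=p\geq 2$; a short case analysis shows this is incompatible with $d\le 2$ (if $d=1$ the local group is trivial, and if $d=2$ a transitive group on two points has no point stabilizer of order a prime $p\geq 2$), so automatically $d\geq 3$. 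For arc-transitivity: a vertex-transitive group $G$ whose local group $G_v^{\Gamma(v)}$ is transitive on $\Gamma(v)$ is automatically arc-transitive, and a weakly $p$-subregular group is transitive by definition; hence the arc-transitivity hypothesis of Theorem~\ref{weaklypsub} is satisfied.

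With these reductions in place, the argument is essentially immediate. Apply Theorem~\ref{weaklypsub} to obtain a $G$-consistent walk $\a$ with $G_\a=1$. Every $G$-consistent walk induces a $G$-consistent cycle whose stabilizer is contained in the stabilizer of the walk (as observed in Section~\ref{s6}), so there is a $G$-consistent cycle $\a'$ with $G_{\a'}=1$ as well. Now invoke the second sentence of Corollary~\ref{CC}: since $\Gamma$ is connected, $d$-valent, $G$ is vertex-transitive, and there is a $G$-consistent cycle with trivial stabilizer, we conclude $\d(G)\le d$.

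The only real subtlety—and the step I expect to need the most care—is verifying that the hypotheses genuinely match up, in particular confirming $d\geq 3$ directly from weak $p$-subregularity rather than assuming it, and checking that ``weakly $p$-subregular for some prime $p$'' as stated (note the plural ``primes'' in the statement is a typo for ``prime'') indeed gives a single transitive local group of the required form at every vertex, using vertex-transitivity of $G$ to transport the property from one vertex to all. Everything else is a direct citation chain: Theorem~\ref{weaklypsub} produces the consistent walk with trivial stabilizer, the walk-to-cycle passage of Section~\ref{s6} upgrades it to a cycle, and Corollary~\ref{CC} converts triviality of the stabilizer into the bound $\d(G)\le d$. No genuinely new computation is required.
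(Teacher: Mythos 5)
Your proof is correct and is exactly the argument the paper intends: the paper gives no explicit proof of Corollary~\ref{CCC}, only the remark that Theorem~\ref{weaklypsub} combined with Corollary~\ref{CC} yields the bound. Your additional care in verifying that weak $p$-subregularity forces $d\geq 3$ and upgrades vertex-transitivity to arc-transitivity, and in passing from the walk produced by Theorem~\ref{weaklypsub} to the induced consistent cycle (whose pointwise stabilizer is contained in that of the walk) before invoking Corollary~\ref{CC}, correctly fills in details the paper leaves implicit.
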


\section*{Acknowledgement}

This paper was written as a part of a doctoral thesis at the University of Ljubljana. The author is grateful to her supervisor Primož Potočnik for all of his help and guidance.

\end{document}